\begin{document}

\newtheorem{theorem}{Theorem}[section]
\newtheorem{proposition}[theorem]{Proposition}
\newtheorem{definition}[theorem]{Definition}
\newtheorem{corollary}[theorem]{Corollary}
\newtheorem{lemma}[theorem]{Lemma}
\newtheorem{question}[theorem]{Question}

\theoremstyle{definition}
\newtheorem{remark}{Remark}
\newtheorem{example}{Example}

\newcommand{\pr}[1]{\left\langle #1 \right\rangle}
\newcommand{\mR}{\mathcal{R}}
\newcommand{\RR}{\mathbb{R}}
\newcommand{\QQ}{\mathbb{Q}}
\newcommand{\mA}{\mathcal{A}}
\newcommand{\mE}{\mathcal{E}}
\newcommand{\mV}{\mathcal{V}}
\newcommand{\mF}{\mathcal{F}}
\newcommand{\mU}{\mathcal{U}}
\newcommand{\mN}{\mathcal{N}}
\newcommand{\mG}{\mathcal{G}}
\newcommand{\mP}{\mathcal{P}}
\newcommand{\mT}{\mathcal{T}}
\newcommand{\mB}{\mathcal{B}}
\newcommand{\mL}{\mathcal{L}}
\newcommand{\mK}{\mathcal{K}}
\newcommand{\C}{\mathrm{C}}
\newcommand{\mC}{\mathcal{C}}
\newcommand{\mO}{\mathcal{O}}
\newcommand{\mM}{\mathcal{M}}
\newcommand{\KK}{\mathbb{K}}
\newcommand{\FF}{\mathbb{F}}

\newcommand{\hB}{{\mathcal{B}^+}}

\newcommand{\D}{\mathrm{D}}
\newcommand{\0}{\mathrm{o}}
\newcommand{\OD}{\mathrm{OD}}
\newcommand{\Do}{\D_\mathrm{o}}
\newcommand{\sone}{\mathsf{S}_1}
\newcommand{\gone}{\mathsf{G}_1}
\newcommand{\gfin}{\mathsf{G}_\mathrm{fin}}
\newcommand{\sfin}{\mathsf{S}_\mathrm{fin}}
\newcommand{\Em}{\longrightarrow}
\newcommand{\menos}{{\setminus}}
\newcommand{\w}{{\omega}}
\newcommand{\sel}[1]{\mathsf{S}_{#1}}
\newcommand{\game}[1]{\mathsf{G}_{#1}}
\newcommand{\pl}[1]{\textnormal{\texttt{Player}}\textnormal{\texttt{ #1}}}
\newcommand{\win}[2]{\textnormal{\texttt{#1}}\uparrow{\mathsf{G}}_{\mathrm{#2}}}
\newcommand{\Win}[2]{\textnormal{\texttt{#1}}\uparrow{\mathsf{G}}_{#2}}
\newcommand{\los}[2]{\textnormal{\texttt{#1}}\not{\!\uparrow}\,{\mathsf{G}}_{\mathrm{#2}}}
\newcommand{\Los}[2]{\textnormal{\texttt{#1}}\not{\!\uparrow}\,{\mathsf{G}}_{{#2}}}
\newcommand{\Id}{\underline{\textnormal{Id}}}

\title{Bornologies and filters in selection principles on function spaces}
\author[L. F. Aurichi]{Leandro F. Aurichi$^1$}
\thanks{$^1$ Supported by FAPESP (2017/09252-3)}
\address{Instituto de Ci\^encias Matem\'aticas e de Computa\c c\~ao,
Universidade de S\~ao Paulo, Caixa Postal 668,
S\~ao Carlos, SP, 13560-970, Brazil}
\email{aurichi@icmc.usp.br}

\author[R. M. Mezabarba]{Renan M. Mezabarba$^2$}
\thanks{$^2$ Supported by CNPq (140427/2016-3)}
\address{Instituto de Ci\^encias Matem\'aticas e de Computa\c c\~ao,
Universidade de S\~ao Paulo, Caixa Postal 668,
S\~ao Carlos, SP, 13560-970, Brazil}
\email{rmmeza@icmc.usp.br}

\keywords{topological games, selection principles, countable fan tightness, bornology, filters, function spaces}

\subjclass[2010]{Primary 54D20; Secondary 54G99, 54A10}

\begin{abstract}
We extend known results of selection principles in $C_p$-theory to the context of spaces of the form $C_{\mB}(X)$, where $\mB$ is a bornology on $X$. Particularly, by using the filter approach of Jordan to $C_p$-theory, we show that $\gamma$-productive spaces are productive with a larger class of $\gamma$-like spaces.
 
\end{abstract}
\maketitle
\section{Introduction}

The framework of selection principles, introduced by Scheepers in \cite{Scheep1996}, provides a uniform manner to deal with diagonalization processes that appears in several mathematical contexts since the 1920's. Detailed surveys on this subject are provided in \cite{tsabanextravaganza,sakaischeepers}. Here we present a brief introduction, in order to fix notations.

Given an infinite set $S$, let $\mathcal{A}$ and $\mathcal{C}$ be families of nonempty subsets of $S$. We consider the following classic selection principles:
\begin{itemize}
\item $\sone(\mathcal{A,C})$: for each sequence $(A_n:n\in\omega)$ of elements of $\mathcal{A}$ there is a sequence $(C_n:n\in\omega)$ such that $C_n\in A_n$ for all $n$ and $\{C_n:n\in\omega\}\in \mathcal{C}$;
\item $\sfin(\mathcal{A,C})$: for each sequence $(A_n:n\in\omega)$ of elements of $\mathcal{A}$ there is a sequence $(C_n:n\in\omega)$ such that $C_n\in[A_n]^{<\omega}$ for all $n$ and $\bigcup_{n\in\omega}C_n\in\mathcal{C}$.
\end{itemize}

There are natural infinite games of perfect information associated with these selection principles. In the same setting of the above paragraph, a play of the game $\gone(\mathcal{A,C})$ is defined as follows: for every inning $n<\omega$, \pl{I} chooses an element $A_n\in \mathcal{A}$, and then \pl{II} picks a $C_n\in A_n$; \pl{II} wins the play if $\{C_n:n\in \omega\}\in\mathcal{C}$. The game $\gfin(\mathcal{A,C})$ is defined in a similar way.

For $\textnormal{\texttt{J}}\in\{\textnormal{\texttt{I,II}}\}$, we denote the sentence ``\pl{J} has a winning strategy in the game $\mathsf{G}$'' by $\win{J}{}$, while its negation is denoted by $\texttt{J}\,{\not{\!\uparrow}}\,\,\mathsf{G}$. The interest about these games lies on finding winning strategies for some of the players and, in the topological context, asking how the topological properties of a space determine these strategies for particular instances of families $\mA$ and $\mC$. 

In the next diagram, the straight arrows summarize the general implications between these principles. \footnotesize
\begin{equation}\begin{tikzpicture}
\matrix(m)[matrix of nodes, row sep=6mm, column sep=5mm,
  jump/.style={text width=10mm,anchor=center},
  txt/.style={anchor=center},]
{
 $\win{II}{1}(\mathcal{A,C})$&$\los{I}{1}(\mathcal{A,C})$&$\sone(\mathcal{A,C})$\\
 
 $\win{II}{fin}(\mathcal{A,C})$&$\los{I}{fin}(\mathcal{A,C})$&$\sfin(\mathcal{A,C})$\\
};
\draw[double, ->] (m-1-1) -- (m-1-2);
\draw[double, ->] (m-1-2) -- (m-1-3);

\draw[double, ->] (m-2-1) -- (m-2-2);
\draw[double, ->] (m-2-2) -- (m-2-3);

\path[dashed, ->] (m-1-3) edge [double,bend right=30] (m-1-2);
\path[dashed, ->] (m-2-3) edge [double,bend left=30] (m-2-2);

\draw[double, ->] (m-1-1) -- (m-2-1);
\draw[double, ->] (m-1-2) -- (m-2-2);
\draw[double, ->] (m-1-3) -- (m-2-3);
\end{tikzpicture}\end{equation}\normalsize

The dashed arrows above mark implications that are not necessarily true in general. An important situation, in which these converses hold, occurs when one takes $\mathcal{A}=\mathcal{C}=\mathcal{O}(X)$, where $\mO(X)$ denotes the family of all open coverings of a topological space $X$.

\begin{theorem}[Hurewicz, 1925]\label{Hur} For a topological space $X$, $\sfin(\mO(X),\mO(X))$ is equivalent to $\los{I}{fin}(\mO(X),\mO(X))$.\end{theorem}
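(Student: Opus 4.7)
The implication $\los{I}{fin}(\mO(X),\mO(X))\Rightarrow\sfin(\mO(X),\mO(X))$ is one of the always-valid arrows in the diagram above and requires no hypothesis on the families involved, so only the converse needs work.

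For $\sfin(\mO(X),\mO(X))\Rightarrow\los{I}{fin}(\mO(X),\mO(X))$ I would argue by contrapositive: given an arbitrary strategy $\sigma$ of \pl{I} in $\gfin(\mO(X),\mO(X))$, I would produce a legal play $(F_k)_{k\in\omega}$ for which $\bigcup_k F_k$ covers $X$, witnessing that $\sigma$ is not winning. A preliminary observation is that $\sfin(\mO(X),\mO(X))$ forces $X$ to be Lindel\"of---applying the principle to a constant sequence $(\mU,\mU,\ldots)$ extracts a countable subcover from any open cover $\mU$---so without loss of generality every cover $\sigma(t)$ played along a legal history $t$ is countable. Consequently the tree $T$ of legal partial plays against $\sigma$ (a node at level $k$ being a sequence $(F_0,\ldots,F_{k-1})$ with $F_i\in[\sigma(F_0,\ldots,F_{i-1})]^{<\omega}$) has countable branching at each node and is therefore a countable tree overall.

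The technical core is a simultaneous recursion that dovetails countably many applications of $\sfin$ with the construction of the play. I would fix a partition $\omega=\bigsqcup_{k\in\omega}A_k$ into infinitely many infinite blocks and enumerate $T$ in such a way that block $A_k$ is tied to the cover $\sigma(F_0,\ldots,F_{k-1})$ that \pl{I} will play at inning $k$ against the partial play being constructed. At inning $k$ one then applies $\sfin$ to the countable sequence of covers indexed by $A_k$ (which reduce, after the earlier commitments, to copies of $\sigma(F_0,\ldots,F_{k-1})$) to extract a finite $F_k\in[\sigma(F_0,\ldots,F_{k-1})]^{<\omega}$ that drives the accumulated union $\bigcup_{j\leq k}F_j$ towards covering $X$, with the coverage of $X$ at the end guaranteed by a single master application of $\sfin$ to the global sequence of covers tagged by $\omega=\bigsqcup_k A_k$.

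The main obstacle I expect is precisely the interplay between the tree structure of plays under $\sigma$ and the linear structure of a single legal play: the recursion must arrange that the finite selections extracted at inning $k$ live in the correct cover $\sigma(F_0,\ldots,F_{k-1})$---which is only revealed after $F_0,\ldots,F_{k-1}$ have been committed---while the accumulated union over all innings still covers $X$, even though the play traverses only one branch of $T$. Handling this bookkeeping is the technical heart of Hurewicz's original 1925 argument and is what places the statement beyond the always-valid implications summarized by the diagram.
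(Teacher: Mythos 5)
The paper does not prove Theorem~\ref{Hur}: it is quoted as a classical result of Hurewicz, so there is no internal argument to measure yours against. Your treatment of the easy direction (the constant strategy that ignores \pl{II}'s moves turns ``$\sigma$ is not winning'' into exactly $\sfin$) and of the preliminary reductions is correct and standard: $\sfin(\mO(X),\mO(X))$ does imply Lindel\"of, one may assume \pl{I}'s moves are countable by passing to countable subcovers, and the tree $T$ of legal partial plays is then countable.

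The gap is in what you call the technical core, and it is not a bookkeeping issue that more care would dissolve. As described, the construction is circular: the ``global sequence of covers tagged by $\omega=\bigsqcup_k A_k$'' to which the ``single master application of $\sfin$'' is applied consists of the covers $\sigma(F_0,\dots,F_{k-1})$, which are determined only by the selections $F_0,F_1,\dots$ that this very application is supposed to produce. If instead you apply $\sfin$ inning by inning, then at inning $k$ you are applying it to infinitely many copies of the single cover $\sigma(F_0,\dots,F_{k-1})$, which yields nothing beyond a countable subcover, of which \pl{II} may keep only a finite piece; without a pre-specified countable list of targets for $F_k$ to hit, ``driving the accumulated union towards covering $X$'' has no content. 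The deeper obstruction, which the plan does not address, is that any application of $\sfin$ to an enumeration of all node-covers of $T$ scatters its finite selections over nodes that need not lie on any one branch, while a legal play traverses exactly one branch and may only use selections from the covers met along it. Supplying the device that overcomes this --- for instance, first closing each of \pl{I}'s moves under finite unions so that every move is a countable directed (essentially increasing) cover from which \pl{II} needs only one sufficiently large element, and then using this monotonicity to absorb, at each node of the branch being built, all the finitely many precomputed commitments relevant there --- is precisely the missing idea, and it is also where the finiteness of \pl{II}'s moves is genuinely exploited; the fact that your outline nowhere uses anything special about $\sfin$ as opposed to $\sone$ is a warning sign, since the $\sone$ analogue (Theorem~\ref{Paw}) resisted proof for seventy years and requires a substantially harder argument. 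As it stands, your proposal correctly locates the difficulty but does not resolve it.
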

\begin{theorem}[Pawlikowski, 1994]\label{Paw} For a topological space $X$, $\sone(\mO(X),\mO(X))$ is equivalent to $\los{I}{1}(\mO(X),\mO(X))$.\end{theorem}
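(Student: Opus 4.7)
The implication $\los{I}{1}(\mO(X),\mO(X)) \Rightarrow \sone(\mO(X),\mO(X))$ is immediate from the diagram preceding the theorem: if $\sone(\mO(X),\mO(X))$ fails, a witnessing sequence $(\mathcal{U}_n)_{n<\omega}$ of open covers yields a memoryless winning strategy for \pl{I}, who plays $\mathcal{U}_n$ at inning $n$ independently of \pl{II}'s moves.

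For the converse, the plan is to fix an arbitrary strategy $\sigma$ for \pl{I} in $\gone(\mO(X),\mO(X))$ and produce a $\sigma$-consistent play in which \pl{II} wins. Applying $\sone(\mO(X),\mO(X))$ to the constant sequence built from any given open cover shows that $X$ is Lindel\"of, so we may assume that each value of $\sigma$ is a countable open cover. Recursively, I attach to every $s \in \omega^{<\omega}$ a $\sigma$-consistent partial play $\pi(s)$ together with an enumeration $\sigma(\pi(s)) = \{U_{s^\frown n} : n<\omega\}$ of the corresponding response, setting $\pi(s^\frown n) = \pi(s)^\frown U_{s^\frown n}$. A winning continuation for \pl{II} then corresponds to a branch $\tau \in \omega^\omega$ such that $\{U_{\tau\restriction(k+1)} : k<\omega\}$ is an open cover of $X$.

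To find such a $\tau$, I would first form, at each level $k$, a single countable open cover of $X$ by pooling the responses of $\sigma$ across all partial plays of length $k$, namely $\mathcal{W}_k = \{U_{s^\frown n} : s\in\omega^k,\,n<\omega\}$. Applying $\sone(\mO(X),\mO(X))$ to $(\mathcal{W}_k)_{k<\omega}$ produces nodes $t_k \in \omega^{k+1}$ with $\{U_{t_k}:k<\omega\}$ covering $X$; however, the sequences $(t_k)$ typically do not lie along a common branch of $\omega^{<\omega}$, and this is the main obstacle.

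The technical heart of the argument is to coherentize the $\sone$-selection into a single branch $\tau \in \omega^\omega$, and this is the step I expect to need the most care. Following Pawlikowski, the standard device is to arrange the enumerations at each node through a fixed bijection $\omega \leftrightarrow \omega^{<\omega}$ so that a single application of $\sone$ to the resulting sequence of covers simultaneously yields a cover of $X$ and determines a coherent branch $\tau$. Once $\tau$ is in hand, the $\sigma$-consistent play $\bigl(U_{\tau\restriction 1}, U_{\tau\restriction 2}, \ldots\bigr)$ has \pl{II}'s moves covering $X$, so $\sigma$ is not winning; hence $\los{I}{1}(\mO(X),\mO(X))$ holds.
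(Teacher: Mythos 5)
The paper does not prove this theorem; it is quoted as a classical result of Pawlikowski, so your proposal can only be judged on its own merits. Your easy direction is fine, and your setup for the hard direction (reduction to countable covers via Lindel\"ofness, the tree $(U_s)_{s\in\omega^{<\omega}}$ of $\sigma$-consistent positions, the reformulation as finding a branch $\tau$ with $\{U_{\tau\restriction(k+1)}:k<\omega\}$ a cover) is the standard and correct frame. You also correctly identify the crux: a levelwise application of $\sone$ scatters the selected nodes across incomparable positions.

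The gap is that your proposed resolution does not resolve this. Re-enumerating the nodewise covers through a bijection $\omega\leftrightarrow\omega^{<\omega}$ and applying $\sone$ once is combinatorially the same move you already rejected: you still obtain one selection per node of the tree, the union over \emph{all} nodes covers $X$, but a branch visits only a sparse chain of nodes, and a subfamily of a cover need not cover. No amount of re-indexing changes which covers you are selecting from. The actual content of Pawlikowski's theorem lies in replacing the nodewise covers by covers built from \emph{finite common refinements}: for a finite set $F$ of nodes, the family $\bigl\{\bigcap_{s\in F}U_{s^\frown\phi(s)}:\phi\in\omega^{F}\bigr\}$ is again an open cover, and a single selected member of it is contained in $U_{s^\frown\phi(s)}$ for \emph{every} $s\in F$ simultaneously; one then runs a fusion argument, building the branch recursively so that each point of $X$, being captured by some selected intersection, is captured by a set that the branch actually plays. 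Organizing this bookkeeping so that one (or countably many) applications of $\sone$ suffice is the nontrivial part of the proof, and it is exactly the part your proposal leaves as "the step I expect to need the most care." As written, the argument is a correct reduction to the hard lemma, not a proof of it.
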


We introduce in Section~\ref{newsel} a variation of the principles defined above, allowing us to treat simultaneously of several selection principles, based on the ideas presented in \cite{garcia95, ABD}. We shall use these principles in connection with function spaces.

In this context, many dualities are known between selective local properties of $C_p(X)$ and selective covering properties of $X$, where $X$ is a Tychonoff space and $C_p(X)$ denotes the space of the continuous real functions on $X$ with the topology of the pointwise convergence.

Particularly, we are interested in the dualities summarized in the next diagram, where $\Omega$ stands for the collection of $\omega$-coverings of $X$ -- those open coverings $\mU$ such that each finite subset $F$ of $X$ is contained in some element of $\mU$ --, $\Omega_{\0}$ denotes the family $\{A\subset C_p(X):\0\in\overline{A}\}$ and $\0$ is the constant zero function.

\small
\[\begin{tikzpicture}
\matrix(m)[matrix of nodes, row sep=7mm, column sep=8mm,
  jump/.style={text width=10mm,anchor=center},
  txt/.style={anchor=center},]
{
 $\win{II}{1}({\Omega,\Omega})$&$\win{II}{1}(\Omega_{\0},\Omega_{\0})$&$\win{II}{fin}({\Omega,\Omega})$&$\win{II}{fin}(\Omega_{\0},\Omega_{\0})$\\
 $\los{I}{1}(\Omega,\Omega)$&$\los{I}{1}(\Omega_{\0},\Omega_{\0})$&$\los{I}{fin}(\Omega,\Omega)$&$\los{I}{fin}(\Omega_{\0},\Omega_{\0})$\\
$\sone(\Omega,\Omega)$&$\sone(\Omega_{\0},\Omega_{\0})$&$\sfin(\Omega,\Omega)$&$\sfin(\Omega_{\0},\Omega_{\0})$\\
};
\draw[double, <->] (m-1-1) to node[auto]{\cite{Scheep2015}} (m-1-2);
\draw[double, <->] (m-2-1) to node[auto]{\cite{Scheep1997}} (m-2-2);
\draw[double, <->] (m-3-1) to node[auto]{\cite{Sakai1988}} (m-3-2);

\draw[double, ->] (m-1-1) -- (m-2-1);
\draw[double, ->] (m-1-2) -- (m-2-2);

\draw[double, <->] (m-2-1) to node[auto]{\cite{Scheep1997}} (m-3-1);
\draw[double, <->] (m-2-2) -- (m-3-2);

\draw[double, <->] (m-1-3) to node[auto]{\cite{Scheep2015}} (m-1-4);
\draw[double, <->] (m-2-3) to node[auto]{\cite{Scheep1997}} (m-2-4);
\draw[double, <->] (m-3-3) to node[auto]{\cite{Arhanbook} and \cite{Scheep1996b}} (m-3-4);

\draw[double, ->] (m-1-3) -- (m-2-3);
\draw[double, ->] (m-1-4) -- (m-2-4);

\draw[double, <->] (m-2-3) to node[auto]{\cite{Scheep1997}} (m-3-3);
\draw[double, <->] (m-2-4) -- (m-3-4);

\draw[double, ->] (m-1-2) -- (m-1-3);
\draw[double, ->] (m-2-2) -- (m-2-3);
\draw[double, ->] (m-3-2) -- (m-3-3);

\end{tikzpicture}
\]
\normalsize

The work of Caserta et al.~\cite{caserta2012}, generalizing the bottom horizontal equivalences of the diagram above, motivated us to investigate the other equivalences in a broader context, for spaces of the form $C_\mB(X)$, where $\mB$ is a bornology with a compact base. In~\cite{AurMez} we presented generalizations for the horizontal equivalences, but at that time we were not able to solve the vertical ones, originally proved by Scheepers~\cite{Scheep1997} in the context of $C_p$-theory.

In Section~\ref{born} we settle these remaining equivalences, by using the upper semi-finite topology \cite{oldguyMichael} on the family $\mB$, and we analyze its consequences accordingly to the framework presented in Section~\ref{newsel}.
Back to the $C_p$-theory context, Jordan~\cite{jordan2007} obtains general dualities by using filters, and Sections~\ref{filters} and~\ref{productive} are dedicated to extend his results for spaces of the form $C_{\mathcal{B}}(X)$. Particularly, in the last section we show that the class of $\gamma$-productive spaces is \emph{productive} with a class (formally) larger than the class of $\gamma$-spaces -- both definitions are recalled there.

\section{In between $\sone$/$\gone$ and $\sfin$/$\gfin$}\label{newsel}

In this section we fix an infinite set $S$ and families $\mathcal{A}$ and $\mathcal{C}$ of subsets of $S$. We shall denote by $[2,\aleph_0]$ (resp. $[2,\aleph_0)$) the set of all cardinals $\alpha$ such that $2\leq\alpha\leq\aleph_0$ (resp. $2\leq \alpha<\aleph_0)$ and for $n\geq 1$, let $\underline{n}\colon \omega\to[2,\aleph_0)$ be the constant function given by $m\mapsto n+1$ for all $m\in\omega$.

For a function $\varphi\colon\omega\to[2,\aleph_0]$, we consider the following selection principle:
\begin{itemize}
\item $\sel{\varphi}(\mathcal{A,C})$: for each sequence $(A_n:n\in\omega)$ of elements of $\mathcal{A}$ there is a sequence $(C_n:n\in\omega)$ such that $C_n\in[A_n]^{<\varphi(n)}$ for all $n$ and $\bigcup_{n\in\omega}C_n\in\mathcal{C}$.
\end{itemize}

Note that for $\varphi\equiv \aleph_0$, one gets the definition of the selection principle $\sfin$. On the other hand, $\sone(\mathcal{A,C})\Rightarrow\sel{\underline{1}}(\mathcal{A,C})$ and it is formally stronger. However, since $\sel{\underline{1}}(\mathcal{A,C})=\sone(\mathcal{A,C})$ holds for all pairs $(\mathcal{A,C})$ considered along this work, we shall not worry about this, and for simplicity we assume this equality as an additional hypothesis for the general case.
Hence, for each $n\geq 1$ it makes sense to denote the selection principle $\sel{\underline{n}}(\mathcal{A,C})$ as $\sel{n}(\mathcal{A,C})$.

The original prototype of the above selection principle was defined in \cite{garcia95}, where the authors concerned about variations of tightness by taking \[\mathcal{A}=\mathcal{C}=\Omega_x:=\{A\subset X:x\in\overline{A}\}.\] 
In~\cite{ABD}, the natural adaptation of the principle $\sel{\varphi}$ to the context of games was analyzed  for the same pair $(\Omega_x,\Omega_x)$. This motivates our next definition.

For $\mA,\mC$ and $\varphi$ as before, let $\game{\varphi}(\mathcal{A,C})$ be the infinite game of perfect information between $\pl{I}$ and $\pl{II}$, defined as follows:
\begin{itemize}
\item for every inning $n<\omega$, \pl{I} chooses an element $A_n\in \mathcal{A}$, and then \pl{II} picks a $C_n\in [A_n]^{<\varphi(n)}$;
\item \pl{II} wins if $\bigcup_{n\in\omega}C_n\in\mathcal{C}$.
\end{itemize}

Again, the constant function $\varphi\equiv \omega$ yields the game $\gfin(\mathcal{A,C})$, and since we assume $\gone(\mathcal{A,C})=\game{\underline{1}}(\mathcal{A,C})$, we may denote the game $\game{\underline{n}}(\mathcal{A,C})$ as $\game{n}(\mathcal{A,C})$.

The general relationship between these principles is stated in the following
\begin{proposition}
Let $\varphi$ and $\psi$ be functions of the form $\omega\to[2,\aleph_0]$ such that $\psi\leq \varphi$. Then
\[\label{diagramageral}
\begin{tikzpicture}
\matrix(m)[matrix of nodes, row sep=7mm, column sep=6mm,
  jump/.style={text width=10mm,anchor=center},
  txt/.style={anchor=center},]
{
 $\Win{II}{\psi}(\mathcal{A,C})$&$\Los{I}{\psi}(\mathcal{A,C})$&$\sel{\psi}(\mathcal{A,C})$\\
 
 $\Win{II}{\varphi}(\mathcal{A,C})$&$\Los{I}{\varphi}(\mathcal{A,C})$&$\sel{\varphi}(\mathcal{A,C})$\\
};
\draw[double, ->] (m-1-1) -- (m-1-2);
\draw[double, ->] (m-1-2) -- (m-1-3);

\draw[double, ->] (m-2-1) -- (m-2-2);
\draw[double, ->] (m-2-2) -- (m-2-3);

\draw[double, ->] (m-1-1) -- (m-2-1);
\draw[double, ->] (m-1-2) -- (m-2-2);
\draw[double, ->] (m-1-3) -- (m-2-3);
\end{tikzpicture}
\]
\end{proposition}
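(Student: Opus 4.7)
The plan is to verify the six implications (three horizontal and three vertical) in the diagram, all of which should follow from essentially formal considerations. The two horizontal implications in each row are generic facts about two-player games of the form $\mathsf{G}_\varphi$ and do not use $\psi \leq \varphi$ at all, while the three vertical implications are powered by the single observation that
\[
\psi(n) \leq \varphi(n) \implies [A]^{<\psi(n)} \subseteq [A]^{<\varphi(n)},
\]
so any legal move for \pl{II} in $\game{\psi}(\mA,\mC)$ is also legal in $\game{\varphi}(\mA,\mC)$.

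First I would dispatch the vertical arrows. For $\sel{\psi}(\mA,\mC) \Rightarrow \sel{\varphi}(\mA,\mC)$, a witnessing sequence $(C_n)$ for the $\psi$-principle is, by the displayed inclusion, also a witnessing sequence for the $\varphi$-principle. For $\Win{II}{\psi}(\mA,\mC) \Rightarrow \Win{II}{\varphi}(\mA,\mC)$, a winning strategy for \pl{II} in the $\psi$-game remains a legitimate strategy in the $\varphi$-game, and it is still winning because the winning condition $\bigcup_{n}C_n \in \mC$ does not change with $\varphi$. For $\Los{I}{\psi}(\mA,\mC) \Rightarrow \Los{I}{\varphi}(\mA,\mC)$, I would argue by contraposition: given a winning strategy for \pl{I} in $\game{\varphi}(\mA,\mC)$, the same strategy works against \pl{II} in $\game{\psi}(\mA,\mC)$, since any $\psi$-play of \pl{II} is also a valid $\varphi$-play, and the winning condition for \pl{I} (namely $\bigcup_{n}C_n \notin \mC$) is unchanged.

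For the horizontal arrows, $\Win{II}{\psi}(\mA,\mC) \Rightarrow \Los{I}{\psi}(\mA,\mC)$ is the standard perfect-information argument: if both players had winning strategies, running them against each other would produce a single play in which both win, which is impossible. The implication $\Los{I}{\psi}(\mA,\mC) \Rightarrow \sel{\psi}(\mA,\mC)$ is established contrapositively: a failure of $\sel{\psi}(\mA,\mC)$ yields a sequence $(A_n) \in \mA^\omega$ witnessing that every selection $C_n \in [A_n]^{<\psi(n)}$ gives $\bigcup_{n}C_n \notin \mC$, and the \emph{constant} strategy in which \pl{I} plays $A_n$ at inning $n$ regardless of \pl{II}'s moves is then winning. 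The arguments for the $\varphi$-row are identical.

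I do not anticipate a genuine obstacle: everything reduces to the set-theoretic inclusion $[A]^{<\psi(n)} \subseteq [A]^{<\varphi(n)}$ together with the observation that the winning condition depends only on $(A_n,C_n)$ and not on $\varphi$. The only point requiring a sentence of care is the direction of the vertical arrow for \pl{I}, where one must pass to the contrapositive rather than try to transport a strategy directly from $\game{\psi}$ to $\game{\varphi}$.
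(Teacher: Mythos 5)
Your proof is correct, and it fills in exactly the routine verifications that the paper omits (the proposition is stated there without proof as a collection of standard facts): the vertical arrows from the inclusion $[A]^{<\psi(n)}\subseteq[A]^{<\varphi(n)}$, and the horizontal arrows from the usual determinacy-free arguments. You also correctly flag the one point of care, namely that the \pl{I} arrow is transported by contraposition since a winning strategy for \pl{I} restricts from the larger game to the smaller one.
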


Particularly, note that for $\psi=\underline{1}$ and $\varphi\equiv\aleph_0$, the above diagram yields the first one presented in Introduction.
Also, for $\mA=\mC=\mO(X)$, one has the following

\begin{theorem}
[García-Ferreira and Tamariz-Mascarúa~\cite{garcia95}]\label{minicurso} $\sel{f}(\mO(X),\mO(X))$ and $\sone(\mO(X),\mO(X))$ are equivalent for any space $X$ and any function $f\colon\omega\to[2,\aleph_0)$.
\end{theorem}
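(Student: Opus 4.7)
The forward implication $\sone(\mO(X),\mO(X)) \Rightarrow \sel{f}(\mO(X),\mO(X))$ is immediate: any $\sone$-selection $(U_n)$ yields $\{U_n\} \in [\mU_n]^{<f(n)}$ because $f(n) \geq 2$, so all the work is in the converse.

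For $\sel{f}(\mO(X),\mO(X)) \Rightarrow \sone(\mO(X),\mO(X))$, let $(\mU_n : n \in \omega)$ be a sequence of open covers of $X$. Applying $\sel{f}$ to a constant sequence $\mU,\mU,\ldots$ extracts a countable subcover of any open cover, so $X$ is Lindel\"of and we may enumerate each $\mU_n = \{U_{n,k} : k \in \omega\}$. The idea is then to apply $\sel{f}$ not to $(\mU_n)$ itself but to an auxiliary sequence whose $\sel{f}$-selections translate into $\sone$-selections from $(\mU_n)$.

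A natural first attempt is the telescoped family $\mV_n := \{V_{n,m} : m \in \omega\}$ with $V_{n,m} := \bigcup_{k \leq m} U_{n,k}$, which is an open cover linearly ordered by inclusion. Applying $\sel{f}$ to $(\mV_n : n \in \omega)$ produces $C_n \in [\mV_n]^{<f(n)}$ with $\bigcup_n C_n \in \mO(X)$; by nestedness, $\bigcup\{V : V \in C_n\} = V_{n,m_n}$ where $m_n := \max\{m : V_{n,m} \in C_n\}$, so $\{V_{n,m_n} : n \in \omega\}$ covers $X$. This is a $\sone$-style selection from $(\mV_n)$, but since each $V_{n,m_n}$ is only a finite union of members of $\mU_n$, what we have in terms of the original covers is merely a $\sfin$-selection.

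The main obstacle is bridging this last gap: since $\sfin(\mO,\mO) \not\Rightarrow \sone(\mO,\mO)$ in general, the stagewise finiteness of $f$ must genuinely enter the argument. My plan is a game-theoretic reduction: first prove an analogue of Pawlikowski's theorem, $\sel{f}(\mO,\mO) \Leftrightarrow \Los{I}{f}(\mO,\mO)$, by adapting the standard tree construction of a bad cover sequence from a hypothetical \pl{I}-winning strategy in $\game{f}(\mO,\mO)$; then compare $\game{f}$ with $\game{1}$, showing $\Los{I}{f}(\mO,\mO) \Leftrightarrow \los{I}{1}(\mO,\mO)$ by a parallelization argument in which a winning \pl{I}-strategy $\sigma$ in $\game{1}(\mO,\mO)$ is boosted to one in $\game{f}(\mO,\mO)$ through a common refinement of $\sigma$'s responses to the (finitely many, thanks to $f(k) < \aleph_0$) enumerations of \pl{II}'s finite moves $C_0,\ldots,C_{n-1}$; finally, invoking Theorem~\ref{Paw} closes the loop to $\sone(\mO(X),\mO(X))$.
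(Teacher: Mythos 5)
Your easy direction and your diagnosis of the telescoping attempt are both correct, but the route you propose for the hard direction has a genuine gap, and it lies in the step you describe most casually. Note first that the paper does not prove this theorem (it imports it from \cite{garcia95}); on the contrary, it \emph{uses} it, together with Theorem~\ref{Paw} and the monotonicity diagram, to derive the equivalence of $\Los{I}{f}(\mO,\mO)$ with $\los{I}{1}(\mO,\mO)$. Your plan runs that derivation backwards, so you must prove $\win{I}{1}(\mO,\mO)\Rightarrow\Win{I}{f}(\mO,\mO)$ from scratch, and the parallelization you sketch does not do this. If \pl{I} answers \pl{II}'s finite move $C_n$ by a common refinement of $\sigma$'s responses to all ways of feeding the members of $C_0,\dots,C_n$ one at a time into $\gone$, you build a finitely branching tree of simulated $\gone$-runs, and along each branch \pl{II}'s chosen sets indeed fail to cover $X$. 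But what you must show is that $\bigcup_n\bigcup C_n\ne X$, and this set is only contained in the union, over all (possibly continuum many) branches, of the per-branch unions; a union of infinitely many non-covers can perfectly well equal $X$. So ``\pl{I} defeats every simulated branch'' does not yield ``\pl{I} defeats the $\game{f}$-run''. Moreover, the Pawlikowski-type equivalence $\sel{f}(\mO,\mO)\Leftrightarrow\Los{I}{f}(\mO,\mO)$ that forms the first link of your chain is itself a substantial theorem, not a routine tree construction, so that link is also unproven.

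The statement has a short direct proof that avoids games entirely and needs no Lindel\"of hypothesis. Given open covers $(\mU_i)_{i\in\omega}$, put $k_n=f(n)-1\ge 1$ and partition $\omega$ into consecutive blocks $I_n$ with $|I_n|=k_n$. For each $n$ let $\mW_n$ be the family of all intersections $\bigcap_{i\in I_n}U_i$ with $U_i\in\mU_i$ for each $i\in I_n$; this is an open cover, since for each $x$ one may choose $U_i\ni x$ in every $\mU_i$. Apply $\sel{f}$ to $(\mW_n)_{n\in\omega}$ to obtain $C_n\subseteq\mW_n$ with $|C_n|\le k_n=|I_n|$ and $\bigcup_{n}C_n\in\mO(X)$. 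Enumerate $C_n=\{W_{n,1},\dots,W_{n,l_n}\}$ with $l_n\le k_n$, write $W_{n,j}=\bigcap_{i\in I_n}U^{n,j}_i$ with $U^{n,j}_i\in\mU_i$, and let $i_{n,1}<\dots<i_{n,k_n}$ list $I_n$. From $\mU_{i_{n,j}}$ select $U^{n,j}_{i_{n,j}}$ when $j\le l_n$ and an arbitrary member otherwise. Since $W_{n,j}\subseteq U^{n,j}_{i_{n,j}}$, the selected sets cover $X$, witnessing $\sone(\mO(X),\mO(X))$. This block-and-intersect argument is essentially that of \cite{garcia95}, and it is exactly where the finiteness of $f(n)$ enters: the block $I_n$ must be long enough to absorb all $|C_n|\le f(n)-1$ selected intersections, one original cover per intersection.
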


Thus, in this context, all of the following statements are equivalent,
\begin{enumerate}[(i)]\begin{multicols}{2}
\item$\los{I}{1}(\mathcal{O,O})$
\item $\Los{I}{f}(\mathcal{O,O})$
\columnbreak

\item $\sel{f}(\mathcal{O,O})$
\item $\sone(\mathcal{O,O})$,\end{multicols}
\end{enumerate}
because $\sone(\mathcal{O,O})\Rightarrow \los{I}{1}(\mathcal{O,O})$ by Theorem \ref{Paw}.
\begin{remark}
The natural question then is whether the games $\game{f}(\mO(X),\mO(X))$ and $\gone(\mO(X),\mO(X))$ are equivalent or not. Nathaniel Hiers, in a joint work with Logan Crone, Lior Fishman, and Stephen Jackson, recently\footnote{At the Conference Frontiers of Selection Principles, that took place on Warsaw during the two last weeks of August, 2017.} presented an affirmative answer concerning the game $\game{2}$, for any Hausdorff space $X$.

Although their solution can possibly be extended for any function $f\colon\omega\to[2,\aleph_0)$, we mention that in this general case, an affirmative answer can also be obtained when $X$ is a T$_1$ second countable space or a Hausdorff space with G$_\delta$-points.
\end{remark}

However, the above equivalences do not hold in the tightness context. Denoting as $\Id\colon\omega\to[2,\aleph_0)$ the function given by $\Id(n)=n+2$ for all $n\in\omega$, one has the following
\begin{theorem}
[Garc\'ia-Ferreira and Tamariz-Mascar\'ua, \cite{garcia95}]\label{seleq} Let $Y$ be a topological space, $y\in Y$ and let $f\colon\omega\to[2,\aleph_0)$ be a function.
\begin{enumerate}
\item If $f$ is bounded, then $\sel{f}(\Omega_y,\Omega_y)$ is equivalent to $\sone(\Omega_y,\Omega_y)$.
\item If $f$ is unbounded, then $\sel{f}(\Omega_y,\Omega_y)$ is equivalent to $\sel{\Id}(\Omega_y,\Omega_y)$.
\end{enumerate}
\end{theorem}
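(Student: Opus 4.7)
The plan hinges on the elementary topological identity $\overline{A \cup B} = \overline{A} \cup \overline{B}$, valid for any two sets in any topological space and, by induction, for any finite union. Inside $\Omega_y$ this says that $y$ belongs to the closure of a finite union of sets if and only if it belongs to the closure of at least one summand---precisely the mechanism that collapses bounded-size selections into singleton selections in part (1).

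For part (1), fix $n$ with $f \leq \underline{n}$. The implication $\sone(\Omega_y,\Omega_y) \Rightarrow \sel{f}(\Omega_y,\Omega_y)$ is immediate, since $f(k) \geq 2$ permits singleton selections. For the converse, given a sequence $(A_k)_{k<\omega}$ in $\Omega_y$, apply $\sel{f}$ to obtain $C_k \in [A_k]^{\leq n}$ with $\bigcup_{k<\omega} C_k \in \Omega_y$. Each $C_k$ can be listed as $\{a_k^{(1)}, \ldots, a_k^{(n)}\}$ (padding with repetitions if $|C_k| < n$), so that
\[
\bigcup_{k<\omega} C_k = \bigcup_{i=1}^{n} \{a_k^{(i)} : k<\omega\}.
\]
Applying the finite-union closure identity to this union of $n$ sets, together with $y \in \overline{\bigcup_k C_k}$, yields some $i_0$ with $y \in \overline{\{a_k^{(i_0)} : k<\omega\}}$; then $(a_k^{(i_0)})_{k<\omega}$ is a $\sone$-witness for $(A_k)$.

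For part (2), assume $f$ is unbounded. For $\sel{\Id}(\Omega_y,\Omega_y) \Rightarrow \sel{f}(\Omega_y,\Omega_y)$, use unboundedness to pick recursively $n_0 < n_1 < \cdots$ with $f(n_i) \geq i + 2 = \Id(i)$. Given $(A_k)_{k<\omega}$, apply $\sel{\Id}$ to the subsequence $(A_{n_i})_{i<\omega}$ to obtain $D_i \in [A_{n_i}]^{\leq i+1}$ with $\bigcup_i D_i \in \Omega_y$, and set $C_{n_i} = D_i$ and $C_k = \emptyset$ for $k \notin \{n_i : i<\omega\}$; this is a $\sel{f}$-witness since $|C_{n_i}| \leq i+1 < f(n_i)$. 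For the reverse $\sel{f}(\Omega_y,\Omega_y) \Rightarrow \sel{\Id}(\Omega_y,\Omega_y)$, build greedily an injection $\sigma : \omega \to \omega$ with $\sigma(m) \geq f(m) - 2$ for all $m$ (always possible, since at each step $\{k : k \geq f(m) - 2\}$ remains infinite after removing finitely many values). Given $(A_n)_{n<\omega}$, apply $\sel{f}$ to $(A_{\sigma(m)})_{m<\omega}$ to get $E_m \in [A_{\sigma(m)}]^{<f(m)}$ with $\bigcup_m E_m \in \Omega_y$, and set $D_{\sigma(m)} = E_m$ and $D_n = \emptyset$ for $n \notin \sigma(\omega)$; then $|D_{\sigma(m)}| < f(m) \leq \sigma(m) + 2 = \Id(\sigma(m))$, witnessing $\sel{\Id}$.

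No step poses a serious obstacle: the crux is (i) recognising that the finite-union closure identity in part (1) transfers the ``good coordinate'' from an $n$-fold union down to a single sequence, and (ii) correctly aligning the non-uniform size bounds in part (2) via sub-selection in one direction and greedy injective reindexing in the other.
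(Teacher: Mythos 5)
Your argument is correct, and it is essentially the standard proof of this result from the cited reference \cite{garcia95} (the paper itself imports the theorem without proof): part (1) via the identity $\overline{\bigcup_{i\leq n}S_i}=\bigcup_{i\leq n}\overline{S_i}$ applied to the $n$ coordinate sequences, and part (2) via reindexing along a suitable (sub)sequence in each direction. The only point worth smoothing is the padding step in part (1) when some $C_k$ is empty: pad with arbitrary elements of $A_k$, which only enlarges the union and hence preserves $y\in\overline{\bigcup_k C_k}$, so the selected points still lie in the respective $A_k$.
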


Examples 3.7 and 3.8 in \cite{garcia95} show that in general the implications
\[\sone(\Omega_y,\Omega_y)\stackrel{\#}{\Longrightarrow}\sel{\Id}(\Omega_y,\Omega_y)\Rightarrow\sfin(\Omega_y,\Omega_y)\]
are not reversible. Still, the authors also show that for spaces of the form $C_p(X)$, where $X$ is a Tychonoff space, the converse of $(\#$) holds. We prove this in the next section in a more general context.
We finish this section with the counterpart of Theorem \ref{seleq} for games, that will be useful later.

\begin{theorem}
[Aurichi, Bella and Dias~\cite{ABD}]\label{abdt} Let $Y$ be a topological space, $y\in Y$ and let $f\colon\omega\to[2,\aleph_0)$ be a function.
\begin{enumerate}
\item If $f$ is bounded, then the games $\game{f}(\Omega_y,\Omega_y)$ and $\game{{k-1}}(\Omega_y,\Omega_y)$ are equivalent, where $k=\limsup_{n\in\omega} f(n)$.
\item If $f$ is unbounded, then $\game{f}(\Omega_y,\Omega_y)$ and $\game{\Id}(\Omega_y,\Omega_y)$ are equivalent.
\end{enumerate}
\end{theorem}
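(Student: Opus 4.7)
The plan is to mirror the proof of the selection-principle analogue (Theorem~\ref{seleq}) at the level of strategies. Each part requires four directions (two players, two implications), and in every case the goal is to build a winning strategy for the target game out of one in the source, either by concentrating play on a carefully chosen infinite subsequence of innings, or by a finite-modification argument that preserves the closure condition at $y$. I assume throughout that $y$ is non-isolated (otherwise $\Omega_y$ is trivial) and $Y$ is $T_1$, so that finite sets are closed and $\overline{A \cup F} = \overline{A} \cup F$ for any finite $F$.

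For Part~(1), with $f$ bounded and $k = \limsup_n f(n)$, the key combinatorial facts are that $M := \{n \in \omega : f(n) = k\}$ is infinite and that $\{n : f(n) > k\} \subseteq \{n < N_0\}$ for some $N_0$. Writing $M = \{m_0 < m_1 < \cdots\}$, I show $\Win{II}{k-1} \Rightarrow \Win{II}{f}$ by transplanting a winning $\sigma$: at the $m_j$-th inning of $\game{f}$ respond with $\sigma(A_{m_0}, \ldots, A_{m_j})$, and respond $\emptyset$ otherwise; the union of responses coincides with a full $\sigma$-play. For the converse $\Win{II}{f} \Rightarrow \Win{II}{k-1}$, given $\tau$ winning $\game{f}$ I build $\tau'$ whose output is a $(k-1)$-subset of $\tau(\cdot)$ chosen to \emph{retain $y$ whenever $y \in \tau(\cdot)$}. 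Only the finitely many innings $n < N_0$ are affected, so $\bigcup \tau'$ differs from $\bigcup \tau$ by a finite set $F$ with $y \notin F$; by $T_1$-ness, $\overline{\bigcup \tau} = \overline{\bigcup \tau'} \cup F$, hence $y \in \overline{\bigcup \tau'}$ and $\tau'$ is a winning $\game{k-1}$-strategy. The two Player~I directions are symmetric: one uses the $M$-subsequence to transplant moves, and the other applies an analogous truncation to Player~II's responses, using that a winning Player~I strategy in $\game{k-1}$ must always play sets not containing $y$ (else Player~II wins by picking $y$).

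For Part~(2), with $f$ unbounded, I fix an increasing $(n_j)$ with $f(n_j) \geq j + 2 = \Id(j)$, and recursively build a strictly increasing $\phi\colon \omega \to \omega$ satisfying $\phi(n) \geq f(n) - 2$, e.g.\ $\phi(n) := \max(f(n) - 2, \phi(n-1) + 1)$ with $\phi(-1) := -1$. The ``easy'' directions $\Win{II}{\Id} \Rightarrow \Win{II}{f}$ and $\Win{I}{f} \Rightarrow \Win{I}{\Id}$ again transplant via the $(n_j)$-subsequence as in Part~(1). For the hard $\Win{II}{f} \Rightarrow \Win{II}{\Id}$, define $\sigma$ on $\game{\Id}$ by playing $\tau(A'_{\phi(0)}, \ldots, A'_{\phi(n)})$ at the $\phi(n)$-th inning and $\emptyset$ elsewhere; the size bound $\leq f(n) - 1 \leq \phi(n) + 1 < \Id(\phi(n))$ is automatic, and the resulting union matches a $\tau$-play against the simulated Player~I sequence $A_n := A'_{\phi(n)}$, hence lies in $\Omega_y$. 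Dually, $\Win{I}{\Id} \Rightarrow \Win{I}{f}$ proceeds by setting $C'_{\phi(n)} := C_n$ (size valid since $|C_n| \leq f(n) - 1 \leq \phi(n) + 1$) and $C'_j := \emptyset$ otherwise, and using $\sigma_I$'s response on this translated history as Player~I's $\game{f}$-move.

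The main obstacle is the truncation step in Part~(1)'s $\Win{II}{f} \Rightarrow \Win{II}{k-1}$: naive truncation may knock $y$ out of the closure, so the modification must be tailored to retain $y$ whenever it is selected, and $T_1$-ness is used essentially to pull $y$'s membership in the closure back through the removal of a finite set. In Part~(2), the auxiliary function $\phi$ must be constructed recursively to be strictly increasing while dominating $f - 2$ pointwise; the naive choice $\phi(n) = f(n) - 2$ fails in general since $f$ need not be monotone.
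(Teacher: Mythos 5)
The paper contains no proof of this statement to compare against: it is quoted directly from Aurichi--Bella--Dias \cite{ABD}. Judged on its own terms, your scheme --- transplanting a strategy along an infinite set of innings where the two selection budgets are compatible, and repairing the finitely many incompatible innings by a truncation whose total effect on the final union is a finite set --- is the right one. The subsequence directions in both parts are correct, the recursive construction of the strictly increasing $\phi$ dominating $f-2$ is exactly what is needed (the naive $f(n)-2$ indeed fails for non-monotone $f$), and Part~(2) requires no separation axiom since no selected point is ever discarded there.

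The one genuine issue is that in Part~(1) you prove a weaker theorem than the one stated. The statement is for an \emph{arbitrary} topological space $Y$, but both truncation directions ($\Win{II}{f}\Rightarrow\Win{II}{{k-1}}$ and $\Win{I}{{k-1}}\Rightarrow\Win{I}{f}$) use $T_1$ essentially, through $\overline{A\cup F}=\overline{A}\cup F$ for finite $F$. Without $T_1$ the relevant identity is $\overline{A\cup F}=\overline{A}\cup\bigcup_{z\in F}\overline{\{z\}}$, so what must be arranged is not $y\notin F$ but $y\notin\overline{\{z\}}$ for every discarded point $z$. This can be arranged in general: for the \pl{II} direction, prepend the rule ``if \pl{I} ever offers a set containing a point $z$ with $y\in\overline{\{z\}}$, select $\{z\}$ and the play is already won,'' so that in the remaining case every point your strategy ever handles satisfies $y\notin\overline{\{z\}}$ and the finite discard is harmless; for the \pl{I} direction, note that a winning strategy for \pl{I} in $\game{{k-1}}(\Omega_y,\Omega_y)$ can never play a set containing such a $z$ (else \pl{II} selects $\{z\}$ and wins), which strengthens your ``never contains $y$'' observation to exactly what is needed. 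With that substitution your argument yields the theorem in full generality; as written it only establishes the $T_1$ case.
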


\section{Bornologies as hyperspaces}\label{born}
We recall the basic definitions from \cite{AurMez}. A {bornology} $\mB$ on a topological space $X$ is an ideal of subsets of $X$ that covers the space. A subset $\mathcal{B}'$ of $\mB$ is called a compact base for the bornology $\mB$ if $\mB'$ is cofinal in $\mathcal{B}$ with respect to inclusion and all its elements are compact subspaces of $X$.

For a topological space $X$ and a bornology $\mB$ on $X$, we call the {topology of the uniform convergence on} $\mB$, denoted by $\mathcal{T}_{\mB}$, as the topology on $C(X)$ having as a neighborhood base at each $f\in C(X)$ the sets of the form \[\langle B,\varepsilon\rangle[f]:=\{g\in C(X):\forall x\in B(|f(x)-g(x)|<\varepsilon)\},\]
for $B\in\mB$ and $\varepsilon>0$. By $C_{\mB}(X)$ we mean the space $(C(X),\mathcal{T}_{\mB})$.

It can be showed that $\mT_{\mB}$ is obtained from a separating uniformity over $C(X)$, from which it follows that $C_{\mB}(X)$ is a Tychonoff space (see McCoy and Ntantu \cite{McCoybook}). It is also worth to mention that $C_{\mathcal{B}}(X)$ is a homogeneous space, so there is no loss of generality in fixing an appropriate point from $C_{\mathcal{B}}(X)$ in order to analyze its closure properties -- in this case, we fix the zero function $\0\colon X\to\RR$.

A collection $\mC$ of open sets of $X$ is a $\mB$-covering for $X$ if for every $B\in\mB$ there is a $C\in\mC$ such that $B\subset C$. Following the notation of Caserta {et al.}\cite{caserta2012}, we denote by $\mathcal{O}_{\mB}$ the collection of all open $\mB$-coverings for $X$. When $\mU\in\mO_{\mB}$ is such that $X\not\in\mU$, the $\mathcal{B}$-covering $\mathcal{U}$ is said to be nontrivial. An important fact about nontrivial $\mB$-coverings is that any cofinite subset of it is also a $\mB$-covering of $X$.

The main examples of bornologies with a compact base on a topological space $X$ are the bornologies $\mathcal{F}=[X]^{<\aleph_0}$ and $\mathsf{K}=\{A\subset X:\exists K\subset X$ compact and $A\subset K\}$ $-$ if $X$ is a Hausdorff space, then $\mathsf{K}=\{A\subset X:\overline{A}$ is compact$\}$. For $\mB=\mathcal{F}$, one has $C_{\mF}(X)=C_p(X)$ and the $\mF$-coverings turns out to be the $\omega$-coverings of $X$. Also, if $X$ is Hausdorff, it follows that $C_{\mathsf{K}}(X)=C_{k}(X)$, where $C_k(X)$ denotes the set $C(X)$ with the compact-open topology. One readily sees that $\mO_{\mathsf{K}}=\mK$, where $\mK$ denotes the set of the so called $K$-coverings of $X$.

Now, recall we want to generalize the following theorem for $\mB$-coverings.

\begin{theorem}
[Scheepers~\cite{Scheep1997}]\label{original} Let $X$ be a Tychonoff space.
\begin{enumerate}
\item $\sfin(\Omega,\Omega)$ is equivalent to $\los{I}{fin}(\Omega,\Omega)$.
\item $\sone(\Omega,\Omega)$ is equivalent to $\los{I}{1}(\Omega,\Omega)$.
\end{enumerate}
\end{theorem}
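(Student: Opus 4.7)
The plan is to establish each equivalence by proving the non-trivial directions $\sfin(\Omega,\Omega)\Rightarrow\los{I}{fin}(\Omega,\Omega)$ and $\sone(\Omega,\Omega)\Rightarrow\los{I}{1}(\Omega,\Omega)$, since the converses are instances of the universal arrows already displayed in the first diagram of the Introduction. I describe the argument for the $\sone$ item; the $\sfin$ one is the natural adaptation in which Player II picks a finite subset at each inning and $\sfin(\Omega,\Omega)$ is applied in place of $\sone(\Omega,\Omega)$ at the crucial step.

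First, I reduce to the countable case: $\sone(\Omega,\Omega)$ implies $\sone(\mO(X),\mO(X))$ because every open cover refines to an $\omega$-cover through finite unions, so $X$ is Lindel\"of and every $\omega$-cover contains a countable $\omega$-subcover. I may then present each relevant $\omega$-cover as an enumeration $\{V_k:k\in\omega\}$ once and for all.

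Now let $\sigma$ be an arbitrary strategy for Player I in $\gone(\Omega,\Omega)$. I unroll $\sigma$ along the tree $\omega^{<\omega}$: to each node $s\in\omega^{<\omega}$ I associate recursively an enumerated $\omega$-cover $\sigma_s=\{V^s_k:k\in\omega\}$, where $\sigma_\emptyset$ is Player I's first move and, writing $t=s{}^\frown k$, $\sigma_t$ is the $\omega$-cover $\sigma$ produces after Player II has responded with $V^\emptyset_{s(0)},V^{s|_1}_{s(1)},\ldots,V^s_k$. Fixing a bijection $\phi\colon\omega\to\omega^{<\omega}$ and applying $\sone(\Omega,\Omega)$ to the countable sequence $(\sigma_{\phi(n)})_{n\in\omega}$ yields $W_n\in\sigma_{\phi(n)}$ with $\{W_n:n\in\omega\}\in\Omega$.

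The final step, and the one I expect to be the main obstacle, is the extraction of a branch: I must find $b\in\omega^\omega$ such that the selections $(W_{\phi^{-1}(b|_n)})_{n\in\omega}$ still form an $\omega$-cover of $X$. Such a $b$ encodes a legal play of Player II against $\sigma$ whose accumulated moves cover $X$ in the $\omega$-sense, contradicting that $\sigma$ was winning. The difficulty is that the $\omega$-cover property of the family $\{W_n\}$ is a statement about the whole tree, and it is not automatic that any single infinite path inherits it. I would argue by a diagonal construction that enumerates the relevant ``challenges'' (finite sets $F\subset X$ that must appear inside some $W_{\phi^{-1}(b|_n)}$) and builds $b$ stage by stage, using the stability of $\Omega$ under passage to cofinite subsets to guarantee that infinitely many future challenges can still be met while extending $b$ within the relevant subtree. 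This is essentially the combinatorial core underlying the Hurewicz--Pawlikowski-type arguments (cf.\ Theorems~\ref{Hur} and~\ref{Paw}), lifted from $\mO(X)$ to the richer family $\Omega$.
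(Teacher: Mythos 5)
Your reduction to countable $\omega$-covers and the unrolling of Player I's strategy along the tree $\omega^{<\omega}$ are fine (though note in passing that the countable $\omega$-subcover property follows from applying $\sone(\Omega,\Omega)$ to a constant sequence of covers, not from Lindel\"ofness of $X$ alone, which is strictly weaker). The genuine gap is exactly the step you yourself flag as the main obstacle: after a single application of $\sone(\Omega,\Omega)$ to the countably many covers indexed by the tree, there is in general no branch $b$ along which the chosen sets $W_{\phi^{-1}(b|_n)}$ form an $\omega$-cover, and the ``diagonal construction'' you sketch does not produce one. A finite set $F\subset X$ is only guaranteed to lie inside $W_n$ for \emph{some} node $\phi(n)$, and all such good nodes may lie off every extension of the finite branch you have already committed to; stability of $\Omega$ under deletion of finitely many sets does not address this, because the obstruction is positional in the tree, not a matter of discarding finitely many selections. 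This is precisely why Pawlikowski's theorem is hard even in the $\mO$ case: the naive tree-plus-one-selection argument already fails there, and the correct proof needs a substantially more elaborate iteration. As written, your proposal reduces the theorem to an unproved claim that is essentially equivalent to it.

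The route the paper takes (following Scheepers, as described in Remark~\ref{key}) deliberately avoids redoing this combinatorial core at the level of $\Omega$. One finds a hyperspace $Y(X)$ --- Scheepers used $\sum_{n\in\omega}X^n$, and the paper's generalization uses $\hB$ --- such that $\omega$-covers of $X$ correspond to open covers of $Y(X)$ and strategies transfer back and forth between $\game{\bullet}(\Omega,\Omega)$ and $\game{\bullet}(\mO(Y),\mO(Y))$; Theorems~\ref{Hur} and~\ref{Paw} are then invoked as black boxes. To repair your argument you should either carry out such a translation or import the full Hurewicz--Pawlikowski machinery explicitly; the branch-extraction step cannot be left as a sketch.
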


Although the requirement of a compact base is necessary to settle the dualities between local properties of $C_\mB(X)$ and covering properties of $X$, the generalization of the above theorem for $\mB$-covering does not need any requirement on the bornology $\mB$: in fact, it holds for an arbitrary family $\mB$ of subsets of $X$.

\begin{remark}[Scheepers' key idea]\label{key}
It may be enlightening to review Scheepers' original proof. The key idea in his arguments for proving Theorem~\ref{original} consists in finding an appropriate hyperspace $Y=Y(X)$ such that $\sel{\bullet}(\Omega,\Omega)$ in $X$ translates as $\sel{\bullet}(\mO(Y),\mO(Y))$ in $Y$. This is done in such a way that he can carry back and forth strategies and plays from the game $\game{\bullet}(\Omega,\Omega)$ in $X$ to the game $\game{\bullet}(\mO(Y),\mO(Y))$. This allows him to reduce the problem to a scenario where Theorems~\ref{Hur} and~\ref{Paw} are available.\end{remark}

The difficulty in following the above sketch when trying to generalize it to $\mathcal{B}$-coverings consists in finding an appropriate hyperspace $Y(X)$. Scheepers originally used $Y(X)=\sum_{n\in\omega}X^n$, but we were not able to relate this construction to the bornology $[X]^{<\aleph_0}$. The way we found to solve this problem was to consider $Y(X)$ as the bornology itself, with an appropriate topology.

More generally, given a family $\mathcal{B}$ of nonempty subsets of a topological space $X$, we consider the topology on $\mathcal{B}$ whose basic open neighborhoods are sets of the form
\[\langle U\rangle :=\{B\in\mathcal{B}:B\subset U\},\]
for $U\subset X$ open. This type of hyperspace has been studied already in the literature\footnote{We would like to thank Valentin Gutev for pointing this out.}: in~\cite{oldguyMichael}, Michael considers over $\mA(X):=\{A\subset X:A\ne\emptyset\}$ the topology generated by sets of the form
\begin{equation} U^{+}:=\{A\in \mA(X):A\subset U\},\end{equation}
with $U$ ranging over the open sets of $X$, and he calls it as the {upper semi-finite topology} on $\mA(X)$; by restricting this construction to the the family of all nonempty closed subsets of $X$, one obtains the so called {upper Vietoris} topology~\cite{Hola}.

Since the topology on $\mB$ generated by the family $\{\langle U\rangle:U\subset X$ is open$\}$ is the topology of $\mB$ as a subspace of $\mA(X)$, we shall write $\hB$ to denote the family $\mB$ endowed with this topology.

The main problem with the hyperspace $\hB$ concerns its poor separation properties: one readily sees that if there are $A,B\in\mB$ such that $A\subset B$, then they cannot be separated as points of $\hB$, showing that $\hB$ is not T$_1$. However, this lack of separation properties will be harmless in our context.

\begin{lemma}\label{lemma1}
Let $X$ be a topological space and let $\mathcal{B}$ be a family of subsets of $X$.
\begin{enumerate}
\item If $\mathcal{U}$ is a $\mathcal{B}$-covering for $X$, then $\langle \mathcal{U}\rangle:=\{\langle U\rangle:U\in\mathcal{U}\}$ is an open covering for $\hB$.
\item If $\mathcal{W}$ is an open covering for $\hB$ consisting of basic open sets, then the family $\widetilde{\mathcal{W}}:=\{U:\langle U\rangle\in \mathcal{W}\}$ is a $\mathcal{B}$-covering for $X$.
\end{enumerate}
Then, let $\varphi\colon\omega\to[2,\aleph_0]$ be a function.
\begin{enumerate}
\setcounter{enumi}{2}
\item $\sel{\varphi}(\mathcal{O_B,O_B})$ holds in $X$ if and only if $\sel{\varphi}(\mO(\hB),\mO(\hB))$ holds.
\item The games $\game{\varphi}(\mathcal{O_B,O_B})$ in $X$ and $\game{\varphi}(\mO(\hB),\mO(\hB))$ are equivalent.
\end{enumerate}
\end{lemma}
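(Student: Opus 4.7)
Parts (1) and (2) are direct unravelings of the definition of the upper semi-finite topology on $\hB$: in (1), each $\langle U\rangle$ is basic open by construction, and whenever $B\in\mathcal{B}$, the $\mathcal{B}$-covering property yields some $U\in\mathcal{U}$ with $B\subset U$, i.e.\ $B\in\langle U\rangle$; in (2), every point $B\in\hB$ lies in some basic $\langle U\rangle\in\mathcal{W}$, which forces $B\subset U\in\widetilde{\mathcal{W}}$.

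Parts (3) and (4) rest on a single translation scheme built on (1) and (2). As a preliminary, I would record that for any open covering $\mathcal{W}$ of $\hB$, the family
\[\mathcal{W}^{*}:=\bigl\{\langle U\rangle : U\subset X\ \text{open and}\ \langle U\rangle\subset W\ \text{for some}\ W\in\mathcal{W}\bigr\}\]
is still an open covering of $\hB$ (the basic sets form a base), and, once a choice function $\langle U\rangle\mapsto W_{U}\in\mathcal{W}$ with $\langle U\rangle\subset W_{U}$ is fixed, every selection $\mathcal{F}\subset\mathcal{W}^{*}$ pushes forward to a selection $\{W_{U}:\langle U\rangle\in\mathcal{F}\}\subset\mathcal{W}$ of cardinality at most $|\mathcal{F}|$.

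For (3), direction $(\Leftarrow)$ takes a sequence $(\mathcal{U}_{n})$ of $\mathcal{B}$-coverings, lifts each via (1) to an open cover $\langle\mathcal{U}_{n}\rangle$ of $\hB$, applies $\sel{\varphi}(\mO(\hB),\mO(\hB))$, and pulls the resulting selection back by $\langle U\rangle\mapsto U$; coverage in $X$ uses that every $B\in\mathcal{B}$, viewed as a point of $\hB$, falls inside some selected $\langle U\rangle$. Direction $(\Rightarrow)$ starts with $(\mathcal{W}_{n})$ in $\hB$, replaces each by $\mathcal{W}_{n}^{*}$, converts via (2) to $\mathcal{B}$-coverings $\widetilde{\mathcal{W}_{n}^{*}}$ of $X$, applies $\sel{\varphi}(\mathcal{O}_{\mathcal{B}},\mathcal{O}_{\mathcal{B}})$, and pushes the selection back through $\langle U\rangle\mapsto W_{U}$; cardinalities are preserved by the observation above, and the containment $B\subset U$ in $X$ yields $B\in\langle U\rangle\subset W_{U}$ in $\hB$, giving coverage.

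For (4), the same translation is run inning by inning. Given a winning strategy $\sigma$ for a player in one of the two games, I would define $\widetilde{\sigma}$ for the corresponding player in the other game by precomposing Player I's moves with the appropriate translation ($\mathcal{U}\mapsto\langle\mathcal{U}\rangle$ or $\mathcal{W}\mapsto\widetilde{\mathcal{W}^{*}}$) and postcomposing Player II's responses with the inverse translation of (3); a play won in the target game by the opposite party translates, via the same set-theoretic correspondence, to a play won against $\sigma$ in the source game, contradicting that $\sigma$ was winning. The main obstacle I anticipate is in this last step: the strategy $\widetilde{\sigma}$ must depend only on the finite play history, so one must fix, once and for all, both the choice function $\langle U\rangle\mapsto W_{U}$ for every open $W$ in $\hB$ and the basic refinement $\mathcal{W}\mapsto\mathcal{W}^{*}$ before the game starts, rather than renegotiating either inning by inning. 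Once that bookkeeping is set up, the equivalence is uniform across the four possible attributions of strategies (to Player I or Player II in either game).
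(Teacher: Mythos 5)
Your proposal is correct and follows essentially the same route as the paper, which simply notes that items (1) and (2) are definitional and that (3) and (4) follow by replacing arbitrary open coverings of $\hB$ with coverings by basic open sets; your $\mathcal{W}^{*}$ refinement and the fixed choice function $\langle U\rangle\mapsto W_{U}$ are exactly the bookkeeping the paper leaves implicit. No gaps.
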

\begin{proof}
The items $(1)$ and $(2)$ follow from the definition of $\hB$. The other items hold because one can replace arbitrary open coverings in $\hB$ with open coverings consisting of basic open sets, what enables one to uses the previous items.
\end{proof}

\begin{theorem}\label{neworiginal}
Let $X$ be a topological space and let $\mathcal{B}$ be a family of subsets of $X$.
\begin{enumerate}
\item If $\sone(\mathcal{O_B,O_B})$ holds, then $\Los{I}{1}(\mathcal{O_B,O_B})$ also holds.
\item If $\sfin(\mathcal{O_B,O_B})$ holds, then $\los{I}{fin}(\mathcal{O_B,O_B})$ also holds.
\end{enumerate}
\end{theorem}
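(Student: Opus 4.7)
The plan is to reduce the theorem to the classical statements of Hurewicz (Theorem~\ref{Hur}) and Pawlikowski (Theorem~\ref{Paw}) applied to the hyperspace $\hB$, using Lemma~\ref{lemma1} as the translation device. This is exactly the strategy advertised in Remark~\ref{key}: the upper semi-finite topology on $\mB$ plays the role that $\sum_{n\in\omega}X^n$ played in Scheepers' original argument.

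Concretely, for item $(1)$, I would start by assuming $\sone(\mathcal{O_B,O_B})$ holds in $X$. By item $(3)$ of Lemma~\ref{lemma1} (applied with $\varphi=\underline{1}$, so that $\sel{\underline{1}}=\sone$ in this setting), this transfers to $\sone(\mO(\hB),\mO(\hB))$ as a selection principle about open coverings of the topological space $\hB$. Since $\hB$ is a topological space (regardless of the poor separation properties noted in the excerpt), Pawlikowski's theorem applies directly, yielding $\Los{I}{1}(\mO(\hB),\mO(\hB))$. Finally, item $(4)$ of Lemma~\ref{lemma1} with $\varphi=\underline{1}$ gives the equivalence between $\game{1}(\mathcal{O_B,O_B})$ played in $X$ and $\game{1}(\mO(\hB),\mO(\hB))$ played in $\hB$; in particular, nonexistence of a winning strategy for \pl{I} transfers back from $\hB$ to $X$, giving $\Los{I}{1}(\mathcal{O_B,O_B})$ as desired.

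The argument for item $(2)$ is identical in structure, except that $\varphi\equiv\aleph_0$ is used so that $\sel{\varphi}=\sfin$ and $\game{\varphi}=\gfin$, and Hurewicz's theorem replaces Pawlikowski's. Since Lemma~\ref{lemma1} is stated uniformly in $\varphi$, both arguments are really instances of the same scheme.

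The main obstacle here is essentially already absorbed into Lemma~\ref{lemma1}: verifying that open coverings of $\hB$ can be taken, without loss of generality, to consist of basic open sets of the form $\langle U\rangle$, so that the translation $\mathcal{U}\mapsto\langle\mathcal{U}\rangle$ (and its inverse $\mathcal{W}\mapsto\widetilde{\mathcal{W}}$) genuinely respects both the selection principle and the game tree. Given that lemma, the present theorem is a short formal consequence, and there is no additional hypothesis on $\mB$ to worry about, since Theorems~\ref{Hur} and~\ref{Paw} hold for arbitrary topological spaces.
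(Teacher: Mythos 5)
Your proposal is correct and is exactly the argument the paper intends: the published proof just says ``repeat the steps in Remark~\ref{key} with $Y(X)=\hB$,'' and your elaboration via items $(3)$ and $(4)$ of Lemma~\ref{lemma1} together with Theorems~\ref{Paw} and~\ref{Hur} is precisely that reduction spelled out. No gaps; the observation that no hypothesis on $\mB$ is needed also matches the paper's remark preceding the theorem.
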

\begin{proof}
Repeat the steps in Remark~\ref{key} with $Y(X)=\hB$.
\end{proof}

\begin{corollary}\label{easy}
Let $X$ be a topological space and let $\mathcal{B}$ be a family of subsets of $X$. For a function $f\colon \omega\to[2,\aleph_0)$, the following are equivalent:
\begin{enumerate}
\item $\los{I}{1}(\mathcal{O_B,O_B})$;
\item $\Los{I}{f}(\mathcal{O_B,O_B})$;
\item $\sel{f}(\mathcal{O_B,O_B})$;
\item $\sone(\mathcal{O_B,O_B})$.
\end{enumerate}
\end{corollary}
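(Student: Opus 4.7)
The plan is to establish the cycle $(4)\Rightarrow(1)\Rightarrow(2)\Rightarrow(3)\Rightarrow(4)$. Three of these four links are essentially cost-free once the earlier machinery is in place; the one genuine step is $(3)\Rightarrow(4)$, which I would obtain by the same hyperspace transfer that drove Theorem~\ref{neworiginal}.

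For $(4)\Rightarrow(1)$, I would simply invoke Theorem~\ref{neworiginal}(1) applied to $\mB$. The implication $(1)\Rightarrow(2)$ is a general monotonicity observation on the game: any winning strategy of Player~I in $\game{f}(\mO_{\mB},\mO_{\mB})$ restricts to a winning strategy in $\gone(\mO_{\mB},\mO_{\mB})$, since Player~II has strictly fewer legal responses in the latter; contrapositively, $\los{I}{1}$ implies $\Los{I}{f}$. For $(2)\Rightarrow(3)$, I would use the standard ``dumb strategy'' argument: given a sequence $(A_n:n\in\w)$ of $\mB$-coverings, the strategy in which Player~I plays $A_n$ at inning~$n$ regardless of history is a legitimate strategy, hence by hypothesis non-winning, so some play defeats it and yields the desired sequence witnessing $\sel{f}(\mO_{\mB},\mO_{\mB})$.

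The substantial step is $(3)\Rightarrow(4)$, and the idea is to factor through $\hB$. Assuming $\sel{f}(\mO_{\mB},\mO_{\mB})$ holds in $X$, Lemma~\ref{lemma1}(3) translates it to $\sel{f}(\mO(\hB),\mO(\hB))$. Because $\hB$ is an honest topological space, Theorem~\ref{minicurso} applies and gives $\sone(\mO(\hB),\mO(\hB))$. Finally, applying Lemma~\ref{lemma1}(3) once more, this time with $\varphi=\underline{1}$, pushes the principle back to $\sone(\mO_{\mB},\mO_{\mB})$ on $X$, closing the cycle.

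I do not expect any step to be a serious obstacle: the monotone implications are formal, the dumb-strategy argument is routine, and the $\hB$-transfer is exactly what Lemma~\ref{lemma1} was designed for. The only point to check with a little care is that Theorem~\ref{minicurso}, whose original statement concerns arbitrary spaces, applies unconditionally to $\hB$ in spite of its failure to be $\mathrm{T}_1$ — but since Theorem~\ref{minicurso} itself is stated without separation hypotheses, this causes no trouble.
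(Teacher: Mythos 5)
Your proof is correct and rests on the same mechanism as the paper's: the published one-line proof likewise transfers all four statements to the hyperspace $\hB$ via Lemma~\ref{lemma1} and invokes the known equivalences for ordinary open covers there (Theorem~\ref{minicurso} together with Pawlikowski's Theorem~\ref{Paw}). Your reorganization into a cycle --- three formal links plus the single substantive link $(3)\Rightarrow(4)$ handled by the $\hB$-transfer --- is only a cosmetic difference, and your closing observation about the absence of separation hypotheses in Theorem~\ref{minicurso} is exactly the point the paper itself makes when noting that the failure of $\hB$ to be $\mathrm{T}_1$ is harmless.
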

\begin{proof}
These equivalences hold for the pair $(\mO(\hB),\mO(\hB))$. Apply Lemma \ref{lemma1} to finish.
\end{proof}

By replacing $\mB$ with $[X]^{<\aleph_0}$ in the above corollary results in a strengthening of Theorem~\ref{original}, while taking $\mB$ as the family of all compact subsets of $X$ yields new results about $K$-coverings. Well, \emph{almost new} results, as we explain below.

\begin{remark}
Following the announcement of this work, Boaz Tsaban brought to our attention that a result similar to Theorem~\ref{neworiginal} also appears in the (thus far, unpublished) MSc thesis of his student Nadav Samet~\cite{Nadav}.

Instead of considering a topology over a family of subsets of $X$, they take a family $\mathbb{P}$ of filters of open sets and observe that sets of the form $O_U:=\{p\in\mathbb{P}:U\in p\}$ define a base for a topology over $\mathbb{P}$ when $U$ ranges over the open sets of $X$. 
\end{remark}

In connection with spaces of the form $C_{\mathcal{B}}(X)$, we first state a generalization of some of our results in \cite{AurMez}.

\begin{proposition}\label{prop1}
Let $X$ be a Tychonoff space and let $\mB$ be a bornology on $X$ with a compact base. Consider a function $\varphi\colon \omega\to[2,\aleph_0]$.
\begin{enumerate}
\item $\sel{\varphi}(\mathcal{O_B,O_B})$ holds in $X$ if and only if $\sel{\varphi}(\Omega_{\0},\Omega_{\0})$ holds in $C_{\mathcal{B}}(X)$.
\item If $\varphi$ is non-decreasing, then the games $\game{\varphi}(\mathcal{O_B,O_B})$ in $X$ and $\game{\varphi}(\Omega_{\0},\Omega_{\0})$ in $C_\mB(X)$ are equivalent.
\end{enumerate}
\end{proposition}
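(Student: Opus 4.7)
The plan is to adapt the classical Sakai--Scheepers dualities for $C_p$-theory to the bornological setting by building two dictionaries between $\Omega_\0$ in $C_\mB(X)$ and $\mO_\mB$, using the compact base $\mB'$ of $\mB$ in an essential way.

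\emph{The dictionaries.} Given a $\mB$-cover $\mU$, for each $K \in \mB'$ and $U \in \mU$ with $K \subset U$, the Tychonoff property together with the compactness of $K$ yields $f_{K,U} \in C(X,[0,1])$ vanishing on $K$ and equal to $1$ on $X \setminus U$. Set $A(\mU) := \{f_{K,U} : K \in \mB', \, U \in \mU, \, K \subset U\}$; then $\0 \in \overline{A(\mU)}$, because any basic neighborhood $\langle B, \varepsilon\rangle[\0]$ is witnessed by $f_{K,U}$ after choosing $K \in \mB'$ with $B \subset K$ and $U \in \mU$ with $K \subset U$. Conversely, given $A \in \Omega_\0$ and $k \geq 1$, the family $\mU_k(A) := \{f^{-1}((-1/k, 1/k)) : f \in A\}$ is a $\mB$-cover of $X$.

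\emph{Part (1).} For $(\Leftarrow)$, apply $\sel{\varphi}(\Omega_\0, \Omega_\0)$ to $(A(\mU_n))_n$, obtain $D_n \in [A(\mU_n)]^{<\varphi(n)}$ with $\0 \in \overline{\bigcup_n D_n}$, and project each $f_{K,U,n} \in D_n$ to $U$ to form $F_n \in [\mU_n]^{<\varphi(n)}$. To verify $\bigcup_n F_n \in \mO_\mB$: for $B \in \mB$, choose $K \in \mB'$ with $B \subset K$; some $f_{K',U',n} \in D_n \cap \langle K, \tfrac12\rangle[\0]$ must exist, and the inequality $|f_{K',U',n}| < \tfrac12$ on $K$ combined with $f_{K',U',n} \equiv 1$ on $X \setminus U'$ forces $K \subset U'$, so $B \subset U' \in F_n$. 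For $(\Rightarrow)$, apply $\sel{\varphi}(\mO_\mB, \mO_\mB)$ to $(\mU_{n+1}(A_n))_n$, obtain $F_n$, and pick one $f \in A_n$ per chosen basic open to form $C_n \in [A_n]^{<\varphi(n)}$. The cofinite property of nontrivial $\mB$-covers (Section~\ref{born}) delivers $\0 \in \overline{\bigcup_n C_n}$: given $\langle B, \varepsilon\rangle[\0]$, discarding the $F_n$ with $n \leq 1/\varepsilon$ still leaves a $\mB$-cover, so some $n > 1/\varepsilon$ and $U \in F_n$ satisfy $B \subset U = f^{-1}((-1/(n+1), 1/(n+1)))$ for the chosen $f \in C_n$, hence $|f| < \varepsilon$ on $B$.

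\emph{Part (2).} The same dictionaries convert strategies on the fly, in each of the four configurations (for \pl{I} or \pl{II}, in either direction). For instance, a winning strategy $\sigma$ for \pl{II} in $\game{\varphi}(\mO_\mB, \mO_\mB)$ induces a strategy for \pl{II} in $\game{\varphi}(\Omega_\0, \Omega_\0)$ that, after \pl{I} plays $A_n$, forms $\mU_{n+1}(A_n)$, consults $\sigma$ to obtain $F_n \in [\mU_{n+1}(A_n)]^{<\varphi(n)}$, and lifts $F_n$ to $C_n \in [A_n]^{<\varphi(n)}$ as in $(\Rightarrow)$ of part~(1); any play against the induced strategy shadows a play against $\sigma$, and the winning condition transfers via the cover-to-closure verification of part~(1). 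The other three configurations are symmetric. The main obstacle is the careful bookkeeping of move-size bounds across the two games: the translation $A_n \leadsto \mU_{n+1}(A_n)$ effectively shifts the resolution parameter by one inning so that the cofinite-tail argument of part~(1) can be run inning by inning against adaptive plays, and the hypothesis $\varphi(n) \leq \varphi(n+1)$ guarantees that the bound $[A_n]^{<\varphi(n)}$ is compatible with the translated one; without monotonicity, a more delicate interleaving (in the spirit of Theorem~\ref{abdt}) would be required.
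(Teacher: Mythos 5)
Your proposal is correct and follows essentially the same route as the paper: your two dictionaries are precisely the content of the paper's lemma adapted from Caserta et al.\ (items (1)--(4)), the only cosmetic difference being that you build $A(\mU)$ from explicit Urysohn functions indexed by the compact base rather than taking all $f$ with $f\upharpoonright(X\setminus U)\equiv 1$ for some $U\in\mU$. The one point worth making explicit (the paper handles it by hypothesizing nontriviality in item (3) of its lemma) is the degenerate case where some $\mU_{n+1}(A_n)$ contains $X$ itself, since there the cofinite-tail step as written can fail and one must instead observe that functions of arbitrarily small norm already witness $\0\in\overline{\bigcup_n C_n}$.
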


The proof is essentially an adaptation of the arguments presented in \cite{AurMez}, and it follows with appropriate applications of the following lemma, adapted from \cite{caserta2012}.

\begin{lemma}
Let $X$ be a Tychonoff space and let $\mathcal{B}$ be a bornology with a compact base on $X$.
\begin{enumerate}
\item If $\mathcal{U}$ is a collection of open sets of $X$ such that $X\not\in\mathcal{U}$, then $\mathcal{U}\in\mathcal{O_B}$ if and only if
$\mathcal{A}(\mathcal{U})=\{f\in C_{\mathcal{B}}(X):\exists U\in\mathcal{U}\left(f\upharpoonright (X\setminus U)\equiv 1\right)\}\in\Omega_{\0}.$
\item Let $A\subset C_{\mathcal{B}}(X)$, $n\in\omega$ and set $\mU_n(A)=\left\{f^{-1}\left[\left(-\frac{1}{n+1},\frac{1}{n+1}\right)\right]:f\in A\right\}$. If $\0\in\overline{A}$, then $\mathcal{U}_n(A)\in\mathcal{O_B}$.
\item If $(A_n)_{n\in\omega}$ is a sequence of finite subsets of $C_{\mathcal{B}}(X)$ such that $\bigcup_{n\in\omega}\mathcal{U}_n(A_n)$ is a nontrivial $\mathcal{B}$-covering, then $\bigcup_{n\in\omega}A_n\in\Omega_{\0}$.
\item If $(A_n)_{n\in\omega}$ is a sequence of finite subsets of $C_{\mathcal{B}}(X)$ such that $\bigcup_{n\in\omega}A_n\in\Omega_{\0}$ and for each $n\in\omega$ and each $g\in A_n$ there is a proper open set $U_g\subset X$ such that $g\upharpoonright(X\setminus U_g)\equiv 1$, then $\bigcup_{n\in\omega}\{U_g:g\in A_n\}\in \mathcal{O_B}$.
\end{enumerate}
\end{lemma}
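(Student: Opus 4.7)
The plan is to prove all four items by unpacking definitions and leveraging Tychonoff separation between the compact members of a base of $\mB$ and the closed complements $X\setminus U$. Throughout, when verifying a $\mB$-covering property it suffices to consider $B$ in a fixed compact base of $\mB$, and when verifying that $\0\in\overline{A}$ it suffices to meet basic neighborhoods $\langle B,\varepsilon\rangle[\0]$ with $B$ compact.

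For item (1), in the forward direction I would fix a basic neighborhood $\langle B,\varepsilon\rangle[\0]$ with $B$ compact, use that $\mathcal{U}\in\mathcal{O_B}$ to pick $U\in\mathcal{U}$ with $B\subset U$, and then apply Urysohn's lemma in the Tychonoff space $X$ to the disjoint pair $B$ (compact) and $X\setminus U$ (closed, nonempty since $X\notin \mathcal{U}$) to obtain $f\in C(X,[0,1])$ with $f\equiv 0$ on $B$ and $f\equiv 1$ on $X\setminus U$; then $f\in \mathcal{A}(\mathcal{U})\cap\langle B,\varepsilon\rangle[\0]$. For the converse, given $B$ in the compact base, the neighborhood $\langle B,1/2\rangle[\0]$ meets $\mathcal{A}(\mathcal{U})$ in some $f$, associated with $U\in\mathcal{U}$; a point $x\in B\setminus U$ would satisfy both $|f(x)|<1/2$ and $f(x)=1$, a contradiction, so $B\subset U$.

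Item (2) is essentially a restatement of ``$\0\in\overline{A}$'': for $B$ in the compact base, the neighborhood $\langle B,1/(n+1)\rangle[\0]$ contains some $f\in A$, and then $B\subset f^{-1}[(-1/(n+1),1/(n+1))]\in \mathcal{U}_n(A)$. For item (3), given $B$ compact and $\varepsilon>0$, I would choose $N$ with $1/(N+1)<\varepsilon$; the family $\bigcup_{n<N}\mathcal{U}_n(A_n)$ is finite, and the nontriviality of the $\mB$-covering $\bigcup_{n\in\omega}\mathcal{U}_n(A_n)$ allows me to invoke the remark (noted in the excerpt) that removing finitely many members of a nontrivial $\mB$-covering still yields a $\mB$-covering, producing some $n\geq N$ and $f\in A_n$ with $B\subset f^{-1}[(-1/(n+1),1/(n+1))]$, hence $f\in \bigcup_m A_m\cap \langle B,\varepsilon\rangle[\0]$.

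Finally, for item (4), given $B$ in the compact base, I use $\bigcup_n A_n\in\Omega_{\0}$ to obtain $g\in A_n$ (for some $n$) with $g\in \langle B,1\rangle[\0]$, i.e., $|g(x)|<1$ for all $x\in B$; since $g\equiv 1$ on $X\setminus U_g$, no point of $B$ can lie outside $U_g$, so $B\subset U_g$. The only place where extra care is needed is the nontriviality hypothesis in (3), which is exactly what licenses the ``cofinite subset'' argument; everywhere else the proof is a direct transcription between the $\mB$-covering property of open sets and the closure condition $\0\in\overline{\cdot}$ via Urysohn separation. I do not anticipate a serious obstacle; the work is bookkeeping to ensure that in each direction one converts between ``the function is small on $B$'' and ``the set $U$ contains $B$'' correctly.
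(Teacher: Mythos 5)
Your proof is correct and follows essentially the same route as the paper's: Tychonoff separation of the compact set $B$ from the closed set $X\setminus U$ for item (1), the equivalence $f\in\langle B,\frac{1}{n+1}\rangle[\0]\iff B\subset f^{-1}\left[\left(-\frac{1}{n+1},\frac{1}{n+1}\right)\right]$ for items (2)--(4), and the ``cofinite subfamily of a nontrivial $\mB$-covering'' fact for item (3). The only nitpick is terminological: what you call Urysohn's lemma is really the functional separation of a compact set from a disjoint closed set in a Tychonoff space (the paper cites Engelking, Theorem 3.1.7, for exactly this), since Urysohn's lemma proper requires normality.
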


\begin{proof}\text{ }

(1)$\quad$If $\mU\in\mO_\mB$ and $\langle B,\varepsilon\rangle[\0]$ is a neighborhood of $\0$, then we obtain a function ${f\in \mA(\mU)\cap \langle B,\varepsilon\rangle[\0]}$, because $\overline{B}$ is compact and $X$ is a Tychonoff space (see \cite[Theorem 3.1.7]{Engelking})\footnote{Particularly, everything still works if $X$ is a normal space and $\mB$ is a bornology with a closed base.}. Conversely, for a $B\in\mB$ we take an $f\in \mA(\mU)\cap \langle B,1\rangle[\0]$, from which we obtain an open set $U\in\mU$ such that $B\subset U$.

(2)$\quad$It follows because for a function $f\in C_{\mB}(X)$, $f\in \langle B,\frac{1}{n+1}\rangle[\0]$ if and only if $B\subset f^{-1}\left[\left(-\frac{1}{n+1},\frac{1}{n+1}\right)\right]$.

(3)$\quad$In addition to the previous observation, we use the fact that if $\mU\in\mO_\mB$ is nontrivial, then $\mU\setminus F\in\mO_\mB$ for any finite subset $F\subset\mU$.

(4)$\quad$For a $B\in\mB$, we take a $g\in\bigcup_{n\in\omega} A_n\cap \langle B,1\rangle[\0]$, from which it follows that $B\subset U_g$, because $g\upharpoonright (X\setminus U_g)\equiv 1$.\qedhere
\end{proof}

Particularly, the monotonicity hypothesis in Proposition \ref{prop1} can be dropped if the function $\varphi$ is of the form $\omega\to[2,\aleph_0)$. 
In fact, this follows from Theorem \ref{abdt} and from its counterpart for $\mathcal{B}$-coverings, which we state below.

\begin{proposition}
 Let $X$ be a topological space with a bornology $\mathcal{B}$, and consider a function $f\colon\omega\to[2,\aleph_0)$.
\begin{enumerate}
\item If $f$ is bounded, then the games $\game{f}(\mathcal{O_B,O_B})$ and $\game{{k}}(\mathcal{O_B,O_B})$ are equivalent, where $k=\limsup_{n\in\omega} f(n)$;
\item If $f$ is unbounded, then $\game{f}(\mathcal{O_B,O_B})$ and $\game{\Id}(\mathcal{O_B,O_B})$ are equivalent.
\end{enumerate}
\end{proposition}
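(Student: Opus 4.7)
The plan is to apply Lemma~\ref{lemma1}(4) to transport the games $\game{f}(\mO_\mB,\mO_\mB)$ on $X$ to the corresponding open-cover games on the hyperspace $\hB$, reducing the claim to a statement parallel to Theorem~\ref{abdt}. The combinatorial simulations used by Aurichi--Bella--Dias in the $\Omega_y$ setting should transfer to the present setting, because the only properties they exploit are that $\mO_\mB$ is closed under taking open supersets (any open superset of a $\mathcal{B}$-covering is again a $\mathcal{B}$-covering) and that empty selections are always legal in $\game{f}$, since $f(n)\geq 2$ ensures that $\emptyset\in[A_n]^{<f(n)}$.

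For the bounded case I would set $A=\{n\in\omega:f(n)=k\}$, which is infinite, and enumerate it increasingly as $\{a_i:i\in\omega\}$; following the indexing of Theorem~\ref{abdt}, I interpret the right-hand game as $\game{k-1}$ and prove each of the four implications by simulation. The easy directions $\win{II}{f}\Rightarrow\win{II}{k-1}$ and $\win{I}{k-1}\Rightarrow\win{I}{f}$ follow because every legal move in $\game{f}$ is also legal in $\game{k-1}$. For $\win{II}{k-1}\Rightarrow\win{II}{f}$, from a winning $\sigma$ for \pl{II} in $\game{k-1}$ I would construct a strategy for \pl{II} in $\game{f}$ by playing $\emptyset$ at each inning $n\notin A$ and, at $n=a_i\in A$, replying with $\sigma$'s response to the cover sequence $A_{a_0},\ldots,A_{a_i}$; this response has size less than $k=f(a_i)$, hence is legal in $\game{f}$, and the total selection coincides with $\sigma$'s output, a $\mathcal{B}$-covering. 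Dually, for $\win{I}{f}\Rightarrow\win{I}{k-1}$, from a winning $\sigma$ for \pl{I} in $\game{f}$ I would simulate a $\game{f}$-play in which the simulated \pl{II} selects $\emptyset$ at every round $m\notin A$ and copies the actual $\game{k-1}$-pick at each $m=a_i$; playing $\sigma$'s cover for $\game{f}$-round $a_n$ at inning $n$ of $\game{k-1}$ produces a winning \pl{I}-strategy because the simulated and actual totals of picks coincide.

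The unbounded case is handled by the same framework with a more flexible correspondence between rounds. Unboundedness of $f$ furnishes an increasing sequence $\{m_i\}$ with $f(m_i)\geq i+2=\Id(i)$; assigning $\game{\Id}$-round $i$ to $\game{f}$-round $m_i$ lets one replay any $\sigma$ witnessing $\win{II}{\Id}$ inside $\game{f}$ with empty padding at the remaining rounds, and the analogous simulation handles the \pl{I} direction. For the converse one picks an increasing injective $n\colon\omega\to\omega$ with $n(m)\geq f(m)-2$, assigning $\game{f}$-round $m$ to $\game{\Id}$-round $n(m)$; the same empty-padding argument yields the equivalences for both players. The main obstacle, in every direction, is causality of the reindexing: when a strategy in the target game must produce its $n$-th cover, the simulation in the source game must already have determined enough history for the source strategy to reply. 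This is ensured by the strict monotonicity of the chosen assignment ($\{a_i\}$ in the bounded case, $\{m_i\}$ and $n(\cdot)$ in the unbounded case), so that every simulated round whose input is required has been processed in time.
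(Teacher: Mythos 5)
Your overall mechanism---reindexing innings, padding with $\emptyset$, and checking that the reindexing respects causality---is the same one the paper uses for \pl{II}, and your treatment of the unbounded case and of the directions $\Win{II}{k-1}\Rightarrow\Win{II}{f}$ and $\Win{I}{f}\Rightarrow\Win{I}{k-1}$ via the infinite set $A=\{n:f(n)=k\}$ is sound. (Your reading of the target game as $\game{k-1}$, following Theorem~\ref{abdt}, is also the reading that the converse direction actually supports: a reply of size $k$ would not be legal in $\game{f}$ even at innings where $f(n)=k$.) The genuine gap is in what you call the ``easy directions.'' It is \emph{not} true that every legal move in $\game{f}$ is legal in $\game{k-1}$: since $k$ is only the $\limsup$ of $f$, there may be finitely many innings $n$ with $f(n)>k$, at which \pl{II} may legally select up to $f(n)-1\geq k$ sets in $\game{f}$. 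Consequently a winning strategy for \pl{II} in $\game{f}$ does not literally restrict to one in $\game{k-1}$, and a winning strategy for \pl{I} in $\game{k-1}$ is simply undefined against such oversized replies. For $\Win{II}{f}\Rightarrow\Win{II}{k-1}$ the paper repairs this by fixing $m_0$ with $f(n)\leq k$ for all $n\geq m_0$, running the $\game{f}$-strategy with $m_0$ dummy innings prepended and discarding its first $m_0$ outputs; discarding is harmless only because a cofinite subfamily of a nontrivial $\mB$-covering is again a $\mB$-covering --- a property of $\mO_\mB$ beyond the two you list as sufficient, and one your proposal never invokes.

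For the \pl{I} clauses the paper does not simulate at all: by Corollary~\ref{easy}, $\Win{I}{\varphi}(\mathcal{O_B},\mathcal{O_B})$ is equivalent to the failure of $\sone(\mathcal{O_B},\mathcal{O_B})$ for every $\varphi\colon\omega\to[2,\aleph_0)$, so the \pl{I} half of both items is automatic and only \pl{II} requires an argument. That shortcut is worth adopting, because a direct proof of $\Win{I}{k-1}\Rightarrow\Win{I}{f}$ is genuinely delicate: the discard trick does not work for \pl{I} (finitely many extra open sets chosen by \pl{II} in the problematic innings can turn a non-$\mB$-covering into a $\mB$-covering), so one would instead have to split an oversized reply across several simulated $\game{k-1}$ innings in which the same cover is replayed. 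Either reduce the \pl{I} clauses to Corollary~\ref{easy} as the paper does, or supply the splitting argument together with the cofinite-subcovering fact explicitly; as written, the bounded case of your argument is incomplete.
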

\begin{proof}
In face of Corollary \ref{easy}, we just need to worry about $\pl{II}$. For the first case where $f$ is bounded, note that there exists an $m_0\in\omega$ such that $f(n)\leq k$ for all $n\geq m_0$. Thus, if $\mu$ is a winning strategy for $\pl{II}$ in $\game{f}$, then $\mu$ induces a winning strategy on $\game{k}$ simply by ignoring the $m_0$ first innings -- here we also use the fact that $\mU\setminus F\in\mO_\mB$ whenever $\mU\in\mO_\mB$ is nontrivial and $F\in[\mU]^{<\omega}$. The converse holds because the set $N=\{n\in\omega:f(n)=k\}$ is infinite.

Now, if $f$ and $g$ are unbounded, by symmetry it is enough to show that \[\Win{II}{f}(\mO_\mB,\mO_\mB)\Rightarrow\Win{II}{g}(\mO_\mB,\mO_\mB).\]
Indeed, if $\mu$ is a strategy for $\pl{II}$ in $\game{f}$, we fix a sequence $(n_i)_{i\in\omega}$ of natural numbers such that $g(n_i)\geq f(i)$ for all $i<\omega$ and then we induce a winning strategy for $\pl{II}$ in the game $\game{g}$ by using $\mu$ only in the innings $n\in\{n_i:i<\omega\}$.
\end{proof}

Now, we can translate Corollary~\ref{easy} for function spaces automatically.

\begin{corollary}
Let $X$ be a Tychonoff space and let $\mathcal{B}$ be a bornology on $X$ with a compact base. For a function $f\colon \omega\to[2,\aleph_0)$, the following are equivalent:
\begin{enumerate}
\item $\los{I}{1}(\Omega_{\0},\Omega_{\0})$;
\item $\Los{I}{f}(\Omega_{\0},\Omega_{\0})$;
\item $\sel{f}(\Omega_{\0},\Omega_{\0})$;
\item $\sone(\Omega_{\0},\Omega_{\0})$.
\end{enumerate}
\end{corollary}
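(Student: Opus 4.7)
The plan is to translate each of the four conditions (1)--(4), which live on $C_\mB(X)$, into the corresponding condition about $\mB$-coverings on $X$, and then invoke Corollary~\ref{easy} directly. In other words, I want to build the commuting diagram of equivalences
\[
\begin{array}{ccc}
(i)_{\,C_\mB(X)} & \Longleftrightarrow & (i)_{\,X}\\[1pt]
\rotatebox{90}{$\Longleftrightarrow$} & & \rotatebox{90}{$\Longleftrightarrow$}
\end{array}
\]
for $i \in \{1,2,3,4\}$, where the horizontal arrows are given by Proposition~\ref{prop1} and the vertical arrows on the right-hand column are Corollary~\ref{easy}.

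For the selection principles in items (3) and (4), the horizontal arrow is immediate from Proposition~\ref{prop1}(1): applying it with $\varphi = f$ yields $\sel{f}(\Omega_{\0},\Omega_{\0}) \Leftrightarrow \sel{f}(\mO_\mB,\mO_\mB)$, and applying it with $\varphi = \underline{1}$ yields $\sone(\Omega_{\0},\Omega_{\0}) \Leftrightarrow \sone(\mO_\mB,\mO_\mB)$. For the game statements (1) and (2), one wants to use Proposition~\ref{prop1}(2), but this proposition as stated requires $\varphi$ to be non-decreasing. For (1) this is harmless, since the constant function $\underline{1}$ is trivially non-decreasing. For (2) with an arbitrary $f\colon\omega\to[2,\aleph_0)$, one invokes the extension of Proposition~\ref{prop1}(2) noted just above the statement of the corollary: combining Theorem~\ref{abdt} on the $\Omega_y$-side with its $\mO_\mB$ counterpart established in the preceding proposition, one reduces $\game{f}$ on both sides to a game indexed by either a constant function (the bounded case) or by $\Id$ (the unbounded case). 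In either case the index is non-decreasing, so Proposition~\ref{prop1}(2) applies, and chaining the equivalences gives \[\Los{I}{f}(\Omega_{\0},\Omega_{\0}) \Longleftrightarrow \Los{I}{f}(\mO_\mB,\mO_\mB).\]

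Once all four conditions have been translated to their $\mO_\mB$-counterparts, Corollary~\ref{easy} finishes the job, since it gives exactly the required equivalence among $\los{I}{1}(\mO_\mB,\mO_\mB)$, $\Los{I}{f}(\mO_\mB,\mO_\mB)$, $\sel{f}(\mO_\mB,\mO_\mB)$, and $\sone(\mO_\mB,\mO_\mB)$. The only potential obstacle is the monotonicity hypothesis for the game translation in (2); this is, however, not really an obstacle in the present setting, because the bounded/unbounded dichotomy reduces the problem to indices that are manifestly non-decreasing, and both sides of the translation admit the same reduction. Aside from this bookkeeping, the proof is a purely formal composition of results already established in the paper.
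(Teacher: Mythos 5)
Your proposal is correct and follows essentially the same route as the paper, which explicitly frames this corollary as an ``automatic'' translation of Corollary~\ref{easy} via Proposition~\ref{prop1}, with the monotonicity issue for the game in item (2) handled exactly as you describe, through the bounded/unbounded dichotomy of Theorem~\ref{abdt} and its $\mO_{\mB}$-counterpart. Nothing further is needed.
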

\begin{remark}
We still do not know if the games $\game{f}(\mO,\mO)$ and $\gone(\mO,\mO)$ are equivalent for arbitrary topological spaces. If it become to be true, at least for T$_0$-spaces, then an analogous result can be derived for function spaces with the tools we presented above.
\end{remark}

\section{Bornologies and filters}\label{filters}

Recall that a filter $\mF$ on a set $C$ is a family of subsets of $C$ closed upwards and closed for taking finite intersections -- it is called a proper filter if $\emptyset\not\in \mF$. For a topological space $Y$ and a point $y\in Y$, we consider the neighborhood filter of $y$, 
\begin{equation}\mN_{y,Y}:=\{N\subset Y:\exists V\subset Y\text{, }V\text{ is open and }y\in V\subset N\}.\end{equation} In this section we intend to generalize Theorem 3 in \cite{jordan2007}, but first we make the necessary definitions, adapted from \cite{jordan2006,jordan2007}. 

For a fixed set $C$, we denote by $\mathbb{F}(C)$ the family of the proper filters of $C$, and for a cardinal $\kappa\geq \aleph_0$ we let $\mathbb{F}_\kappa(C)$ be the family of those proper filters of $C$ of the form
\begin{equation}\label{filtrogerado}\mG^\uparrow:=\{F\subset Y:\exists G\in\mG\left(G\subset F\right)\},\end{equation}
 where $\mathcal{G}\in[\wp(C)]^{\leq \kappa}$ -- particularly, we call the elements in $\mathbb{F}_1(C)$ and $\mathbb{F}_{\aleph_0}(C)$ as principal filters and countable based filters, respectively. If $R\subset C\times D$ is a binary relation on the sets $C$ and $D$ and if $\mathcal{F}$ is a collection of subsets of $C$, we set
$R\left(\mF\right):=\{R[F]:F\in\mF\}^{\uparrow}$. It is worth to mention that the correspondence 
\[\wp\left(\wp(C)\right)\ni \mG\longmapsto \mG^{\uparrow}\in\wp\left(\wp(C)\right),\]
does not determine a function $\wp\left(\wp(C)\right)\to \mathbb{F}(C)$. In fact, $\mG^{\uparrow}$ is a proper filter if and only if $\emptyset\not\in\mG$ and for all $A,B\in\mG$ there is a $G\in\mG$ such that $G\subset A\cap B$.

By a class of filters $\mathbb{K}$ we mean a property about filters, and we write $\mF\in\mathbb{K}$ to indicate that the filter $\mF$ has property $\mathbb{K}$. We also say that a topological space $Y$ is a $\mathbb{K}$-space if $\mN_{y,Y}\in\mathbb{K}$ for all $y\in Y$. For instance, the class of $\mathbb{F}_{\aleph_0}$-spaces is the class of spaces with countable character.

We say that a class of filters $\KK$ is $\FF_1$-composable if for any sets $C,D$ and any relation $R\subset C\times D$, the following holds: 
\begin{equation}\label{composable}\mF\in\mathbb{F}(C)\cap \mathbb{K}\text{ and }R(\mF)\in\mathbb{F}(D)\Rightarrow R(\mF)\in\mathbb{K}.\end{equation}

Note that the condition ``$R(\mF)\in\mathbb{F}(D)$'' in the left hand of the above implication is necessary, because in general there is no guarantee that $R(\mF)$ is a proper filter on $D$. If $R$ is just a relation in $C\times D$, it may happens that $R[F]=\emptyset$ for some $F\in\mF$, and in this case $R(\mF)$ is not a proper filter. However, the situation becomes simpler if $R$ is a function.
\begin{lemma}\label{aha}
Let $C$ and $D$ be sets and let $f\colon C\to D$ be a function.
\begin{enumerate}
\item\label{aha1} If $\mF\in\mathbb{F}(C)$, then $f(\mF)\in\mathbb{F}(D)$.
\item\label{aha2} If $\mG\in\mathbb{F}(D)$, then $f^{-1}(\mG)\in\FF(C)$ if and only if $G\cap f[C]\ne\emptyset$ for all $G\in\mG$.
\end{enumerate}
\end{lemma}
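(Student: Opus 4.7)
The plan is to verify both statements directly from the criterion recorded immediately before the lemma: a family $\mathcal{G}^{\uparrow}$ is a proper filter precisely when $\emptyset \notin \mathcal{G}$ and for every $A, B \in \mathcal{G}$ there exists $G \in \mathcal{G}$ with $G \subset A \cap B$. Upward closure is built into the definition of $\mathcal{G}^{\uparrow}$, so in both parts the only thing to check is nonemptiness and the refinement condition on a convenient generating family.

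For \eqref{aha1}, I would take $\mathcal{G} = \{f[F] : F \in \mathcal{F}\}$ as generators of $f(\mathcal{F})$. Since $f$ is a total function on $C$ and every $F \in \mathcal{F}$ is nonempty (as $\mathcal{F}$ is proper), each $f[F]$ is nonempty, so $\emptyset \notin \mathcal{G}$. For the refinement condition, given $f[F_1], f[F_2] \in \mathcal{G}$, the intersection $F_1 \cap F_2$ lies in $\mathcal{F}$ and $f[F_1 \cap F_2] \subseteq f[F_1] \cap f[F_2]$, which supplies the required witness inside $\mathcal{G}$.

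For \eqref{aha2}, I would instead take $\mathcal{H} = \{f^{-1}[G] : G \in \mathcal{G}\}$. The key identity $f^{-1}[G_1 \cap G_2] = f^{-1}[G_1] \cap f^{-1}[G_2]$, together with $\mathcal{G}$ being closed under finite intersections, makes the refinement condition automatic. The only possible failure of properness is emptiness, and $f^{-1}[G] = \emptyset$ holds exactly when $G \cap f[C] = \emptyset$. Hence $\mathcal{H}^{\uparrow}$ is a proper filter on $C$ if and only if $G \cap f[C] \neq \emptyset$ for every $G \in \mathcal{G}$, giving both directions of the equivalence.

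I do not expect any real obstacle here; the only conceptual point worth highlighting is the asymmetry between \eqref{aha1} and \eqref{aha2}: in \eqref{aha1}, totality of $f$ on $C$ automatically rescues nonemptiness, while in \eqref{aha2} the image $f[C]$ may be too small to meet every $G \in \mathcal{G}$, which is precisely what the side condition ensures.
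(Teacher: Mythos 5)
Your proof is correct. The paper actually states Lemma \ref{aha} without proof, so there is nothing to compare against; your verification of both parts via the criterion stated just before the lemma (that $\mG^{\uparrow}$ is a proper filter iff $\emptyset\notin\mG$ and the generating family is directed downward under intersection), using $f[F_1\cap F_2]\subseteq f[F_1]\cap f[F_2]$ for \eqref{aha1} and $f^{-1}[G_1\cap G_2]=f^{-1}[G_1]\cap f^{-1}[G_2]$ together with $f^{-1}[G]=\emptyset\Leftrightarrow G\cap f[C]=\emptyset$ for \eqref{aha2}, is exactly the routine argument the authors are leaving to the reader.
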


Finally, a class of filters $\KK$ is called $\FF_{\omega}$-steady if for any set $C$ and each pair of filters $\mF\in\FF(C)\cap\KK$ and $\mG\in\FF_{\omega}(C)$ such that $F\cap G\ne\emptyset$ for all $(F,G)\in\mF\times\mG$, the following holds
\begin{equation}\label{steady}\mF\vee\mG:=\{F\cap G:(F,G)\in\mF\times\mG\}^{\uparrow}\in\KK.\end{equation}

Given a Tychonoff space $(X,\tau)$ and a bornology $\mB$ on $X$, we denote by $\Gamma_{\mB}(X)$ the filter on $\tau$ generated by the sets \begin{equation}V(B):=\{U\in\tau:{B}\subset U\},\end{equation} i.e., $\Gamma_{\mB}(X):=\{V(B):B\in\mB\}^\uparrow$. Note that whenever $\mB'\subset \mB$ is a base for $\mB$, then $\{V(B):B\in\mB'\}^\uparrow =\Gamma_{\mB}(X)$.

In \cite{jordan2007}, Jordan considers the filter $\Gamma(X)$ on $\tau$, which coincides with $\Gamma_{[X]^{<\omega}}(X)$ according to our previous definition. Jordan shows that if a class of filters $\KK$ is $\FF_1$-composable and $\FF_\omega$-steady, then $\Gamma(X)\in \KK$ if and only if $C_{p}(X)$ is a $\KK$-space. Our next results aim to extend this conclusion to $\Gamma_{\mathcal{B}}(X)$ and $C_{\mathcal{B}}(X)$. Their proofs are natural adaptations from \cite{jordan2007}.

\begin{proposition}\label{halfultimate}
Let $(X,\tau)$ be a Tychonoff space and let $\mB$ be a bornology with a compact base on $X$. Suppose that $\KK$ is an $\FF_1$-composable class of filters. If $C_{\mB}(X)$ is a $\KK$-space, then $\Gamma_{\mB}(X)\in\KK$.
\end{proposition}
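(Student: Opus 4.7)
The plan is to realise the filter $\Gamma_{\mB}(X)$ on $\tau$ as the image, under a suitable relation $R$, of the neighborhood filter $\mN_{\0,C_{\mB}(X)}$; the conclusion then follows immediately from the $\FF_{1}$-composability of $\KK$. Concretely, I would introduce the relation
\[
R=\{(f,U)\in C_{\mB}(X)\times\tau : f^{-1}((-1,1))\subset U\},
\]
so that $R[A]=\{U\in\tau : \exists f\in A,\ f^{-1}((-1,1))\subset U\}$ for any $A\subset C_{\mB}(X)$.

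The core step is to verify that $R\bigl(\mN_{\0,C_{\mB}(X)}\bigr)=\Gamma_{\mB}(X)$. Fix a compact base $\mB'$ of $\mB$ and, for each $B\in\mB'$, work with the basic neighborhood $\langle B,1\rangle[\0]$. If $f\in\langle B,1\rangle[\0]$ and $(f,U)\in R$, then $B\subset f^{-1}((-1,1))\subset U$, so $R[\langle B,1\rangle[\0]]\subset V(B)$. Conversely, given any open $U\supset B$, the functional separation of the compact set $B$ from the disjoint closed set $X\setminus U$ in the Tychonoff space $X$ yields a continuous $f\colon X\to[0,1]$ with $f|_{B}\equiv 0$ and $f|_{X\setminus U}\equiv 1$. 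This $f$ belongs to $\langle B,1\rangle[\0]$ and satisfies $f^{-1}((-1,1))\subset U$, so $V(B)\subset R[\langle B,1\rangle[\0]]$. Hence $R[\langle B,1\rangle[\0]]=V(B)$ for every $B\in\mB'$. Since $\mB'$ is cofinal in $\mB$, the families $\{\langle B,1\rangle[\0]:B\in\mB'\}$ and $\{V(B):B\in\mB'\}$ are bases of $\mN_{\0,C_{\mB}(X)}$ and $\Gamma_{\mB}(X)$, respectively, and thus the two filters coincide.

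To finish: $\mN_{\0,C_{\mB}(X)}\in\FF(C_{\mB}(X))\cap\KK$ by the $\KK$-space hypothesis, and $R\bigl(\mN_{\0,C_{\mB}(X)}\bigr)=\Gamma_{\mB}(X)\in\FF(\tau)$ as just shown, so the $\FF_{1}$-composability of $\KK$ forces $\Gamma_{\mB}(X)\in\KK$. The delicate ingredient is the functional separation of the compact $B$ from the closed set $X\setminus U$; this is exactly where the assumption that $\mB$ admits a \emph{compact} base is decisive, as without compactness the inclusion $V(B)\subset R[\langle B,1\rangle[\0]]$ could fail.
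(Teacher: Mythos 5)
Your proof is correct in substance and follows essentially the same route as the paper: both arguments realize $\Gamma_{\mB}(X)$ as the image of the neighborhood filter $\mN_{\0,C_{\mB}(X)}$ under a relation into $\tau$ and then invoke $\FF_1$-composability, with the compact base entering exactly through the functional separation of the compact set $B$ from the closed set $X\setminus U$. The only structural difference is one of packaging: the paper factors the relation through an auxiliary set $Y=\{(B,U)\in\mB_0\times\tau:B\subset U\}$ and two functions $\pi,\Phi$, applying composability twice (together with Lemma~\ref{aha} to keep the intermediate filter proper), whereas you use a single explicit relation $R\subset C_{\mB}(X)\times\tau$, which is arguably cleaner and needs only one application. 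There is, however, one inaccurate step: the family $\{\langle B,1\rangle[\0]:B\in\mB'\}$ is \emph{not} a base of $\mN_{\0,C_{\mB}(X)}$ --- a neighborhood $\langle B,\varepsilon\rangle[\0]$ with $\varepsilon<1$ contains no set of the form $\langle B',1\rangle[\0]$ in general --- so you cannot deduce $R\bigl(\mN_{\0,C_{\mB}(X)}\bigr)=\Gamma_{\mB}(X)$ by matching these two families as bases. The repair is immediate: since your separating function vanishes identically on $B$, it belongs to $\langle B,\varepsilon\rangle[\0]$ for \emph{every} $\varepsilon>0$, so the same computation yields $R[\langle B,\varepsilon\rangle[\0]]=V(B)$ for all $\varepsilon\in(0,1]$; hence every $R[N]$ with $N\in\mN_{\0,C_{\mB}(X)}$ contains some $V(B)$ (as $N$ contains some basic $\langle B,\varepsilon\rangle[\0]$ with $B\in\mB'$ by cofinality), and every $V(B)$ with $B\in\mB$ contains some $R[N]$, which is exactly what the identity of the two filters requires. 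The paper's proof states the corresponding identity for all $\varepsilon\in(0,1)$ for precisely this reason.
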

\begin{proof} It is enough to present a neighborhood filter $\mF$ in $C_{\mB}(X)$, a set $Y$ with functions $\pi\colon Y\to\tau$ and $\Phi\colon Y\to C_{\mB}(X)$ such that $\Phi^{-1}(\mF)\in\FF(Y)$ and $\Gamma_\mB(X)=\pi(\Phi^{-1}(\mF))$. In fact, since $\mF\in\KK$ and $\KK$ is $\FF_1$-composable, it follows by the previous lemma that $\Phi^{-1}(\mF)\in\KK$ and $\pi(\Phi^{-1}(\mF))\in\KK$.

Let $\mB_0$ be a compact base for $\mB$. Let $Y:=\{(B,U)\in\mB_0\times\tau:B\subset U\}$ and define $\pi:Y\rightarrow \tau$ by $\pi(B,U)=U$. Since $X$ is a Tychonoff space, for each $(B,U)\in Y$ there exists some $f=f_{(B,U)}\in C_{\mB}(X)$ such that $f\upharpoonright B\equiv 0$ and $f\upharpoonright X\setminus U\equiv 1$, so we may set $\Phi:Y\rightarrow C_{\mB}(X)$ by $\Phi(B,U)=f_{(B,U)}$ for each $(B,U)\in Y$.

Now, we take $\mF:=\mN_{\0,C_{\mB}(X)}=\{\langle B,\frac{1}{n+1}\rangle[\0]:B\in\mB_0,n\in\omega\}^\uparrow$. In order to prove $\pi(\Phi^{-1}(\mF))=\Gamma_{\mB}(X)$, it is enough to note that both filters are generated by the same base. Indeed, for any $B\in\mB_0$ and $\varepsilon\in(0,1)$ one has \[V(B)=\pi\left[\Phi^{-1}\left[\left\langle B,\varepsilon\right\rangle [\0]\right]\right],\] from which the desired equality follows.
\end{proof}

\begin{proposition}\label{halfultimate2}
Let $\KK$ be a class of filters that is $\FF_1$-composable and $\FF_\omega$-steady. For a topological space $(X,\tau)$ and a bornology $\mB$ on $X$, $\Gamma_{\mB}(X)\in\KK$ implies that $C_{\mB}(X)$ is a $\KK$-space.
\end{proposition}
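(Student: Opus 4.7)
The plan is to realize $\mN_{\0,C_\mB(X)}$ as the image, under a suitable relation, of a filter on $\tau\times\omega$ built out of $\Gamma_\mB(X)$ and the Fr\'echet filter on $\omega$, so that the two closure properties of $\KK$ can be applied in sequence. Since $C_\mB(X)$ is homogeneous and $\KK$ is $\FF_1$-composable, it suffices to prove $\mN_{\0,C_\mB(X)}\in\KK$: every other neighborhood filter is the image of $\mN_{\0,C_\mB(X)}$ under the graph of a translation homeomorphism (a bijection, so the pushforward is automatically proper), which belongs to $\KK$ by $\FF_1$-composability.

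The key construction is the relation $R\subset(\tau\times\omega)\times C(X)$ defined by
\[
((U,n),g)\in R \iff U\subset g^{-1}\bigl((-\tfrac{1}{n+1},\tfrac{1}{n+1})\bigr),
\]
applied to the filter
\[
\mathcal{H} := \pi_1^{-1}(\Gamma_\mB(X)) \vee \pi_2^{-1}(\mathrm{Fr})
\]
on $\tau\times\omega$, where $\mathrm{Fr}$ denotes the Fr\'echet filter. The first join factor is the image of $\Gamma_\mB(X)$ under the graph relation $\{(U,(U,n)):U\in\tau,\ n\in\omega\}$; since each $V(B)\times\omega$ is nonempty, it is proper and $\FF_1$-composability places it in $\KK$. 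The second factor is countable-based, hence in $\FF_\omega$. All pairwise intersections of generators take the form $V(B)\times A\ne\emptyset$, so $\mathcal{H}$ is proper, and $\FF_\omega$-steadiness yields $\mathcal{H}\in\KK$. Then $R(\mathcal{H})\in\KK$ by a final application of $\FF_1$-composability, provided that it is proper.

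To conclude, I would verify that $R(\mathcal{H})=\mN_{\0,C_\mB(X)}$ by evaluating $R$ on the base $\{V(B)\times A : B\in\mB,\ A\subset\omega\ \text{cofinite}\}$ of $\mathcal{H}$:
\[
R[V(B)\times A] = \bigcup_{n\in A}\bigl\{g\in C(X) : B\subset g^{-1}((-\tfrac{1}{n+1},\tfrac{1}{n+1}))\bigr\} = \bigcup_{n\in A}\langle B,1/(n+1)\rangle[\0].
\]
Since $\langle B,1/(n+1)\rangle[\0]$ is decreasing in $n$, this union equals $\langle B,1/(\min A+1)\rangle[\0]$; as $B\in\mB$ and the cofinite $A\subset\omega$ vary, these sets form precisely the standard base of $\mN_{\0,C_\mB(X)}$. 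In particular $\0\in R[V(B)\times A]$, so $R(\mathcal{H})$ is proper, and we obtain $R(\mathcal{H})=\mN_{\0,C_\mB(X)}\in\KK$.

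The main difficulty is the design step itself: choosing a relation together with an auxiliary countable-based filter that decouple the two parameters of a basic neighborhood of $\0$ in $C_\mB(X)$, namely the bornology element $B$ and the precision $1/(n+1)$. The Fr\'echet factor is exactly what is needed to feed the shrinking precision into the otherwise ``$\varepsilon$-free'' filter $\Gamma_\mB(X)$; once this configuration is set up, the remaining steps are routine applications of the two defining properties of $\KK$, mirroring Jordan's argument in the $C_p(X)$ setting.
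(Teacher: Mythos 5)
Your proof is correct and follows essentially the same route as the paper's: pull $\Gamma_{\mB}(X)$ back to an auxiliary set, join it with a countably based filter encoding the shrinking precision $1/(n+1)$ (this is where $\FF_\omega$-steadiness enters; your $\pi_2^{-1}(\mathrm{Fr})$ plays exactly the role of the paper's filter generated by the sets $M_n$), and push the result forward onto $\mN_{\0,C_{\mB}(X)}$ via $\FF_1$-composability. The only difference is cosmetic: the paper uses two functions out of the set $Y=\{(f,B,n):f[B]\subset I_n\}\subset C_{\mB}(X)\times\mB\times\omega$, whereas you use a single relation out of $\tau\times\omega$.
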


\begin{proof}
We use a similar strategy as in the proof of the previous proposition. We define a set $Y$ with a proper countably based filter $\mathcal{H}\in\FF_\omega(Y)$ and functions $\pi\colon Y\to C_\mB(X)$ and $\Phi\colon Y\to \tau$ such that $\Phi^{-1}(\Gamma_\mB(X))$ and $\Phi^{-1}(\Gamma_\mB(X))\vee \mathcal{H}$ are proper filters in $Y$, and $\pi(\Phi^{-1}(\Gamma_\mB(X))\vee \mathcal{H})=\mN_{\0,\mC_\mB(X)}$. Again, the hypotheses over $\KK$ guarantee that $\pi(\Phi^{-1}(\Gamma_\mB(X))\vee\mathcal{H})\in\KK$, which is enough to finish the proof, since $C_{\mB}(X)$ is a homogeneous space.

For brevity, we call $I_n:=(-\frac{1}{n+1},\frac{1}{n+1})\subset\RR$ for each $n\in\omega$.

Let $Y:=\{(f,B,n)\in C_{\mB}(X)\times \mB\times\omega:f[{B}]\subset I_n\}$ and $\mathcal{H}:=\{M_n:n\in\omega\}^{\uparrow}$, where $M_n:=\{(f,B,m)\in Y:m\geq n\}$ for each $n\in\omega$. Now, let $\pi:Y\rightarrow C_{\mB}(X)$ be defined by $\pi(f,B,n)=f$ and $\Phi:Y\rightarrow \tau$ defined as $\Phi(f,B,n)=f^{-1}[I_n]$. Since $\Gamma_{\mB}(X)\in\KK$ and $\KK$ is $\FF_1$-composable, it follows that $\Phi^{-1}(\Gamma_{\mB}(X))\in\KK$. 
Also, since $\Phi^{-1}[V(B)]\cap M_n\ne\emptyset$ for all $n\in\omega$, the $\FF_\omega$-steadiness of $\KK$ gives $\Phi^{-1}(\Gamma_{\mB}(X))\vee \mathcal{H}\in\KK$ and, again by the $\FF_1$-composability of $\KK$, $\pi(\Phi^{-1}(\Gamma_{\mB}(X))\vee\mathcal{H})\in\KK$. So, in order to finish the proof we must show that $\mN_{\0,C_\mB(X)}=\pi(\Phi^{-1}(\Gamma_{\mB}(X))\vee\mathcal{H})$. The desired equality follows because
\[\left\langle B,\frac{1}{n+1}\right\rangle[\0]=\pi\left[\Phi^{-1}\left[V(B)\right]\cap M_n\right]\]
holds for any $B\in\mB$ and $n\in\omega$.
\end{proof}

Altogether, the propositions above yields the following

\begin{theorem}\label{ultimate}
Let $X$ be a Tychonoff space and let $\mB$ be a bornology with a compact base on $X$. If $\KK$ is a class of filters that is $\FF_1$-composable and $\FF_\omega$-steady, then $\Gamma_{\mB}(X)\in\KK$ if and only if $C_{\mB}(X)$ is a $\KK$-space.
\end{theorem}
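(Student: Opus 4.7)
My plan is to observe that Theorem~\ref{ultimate} is nothing more than the conjunction of Propositions~\ref{halfultimate} and~\ref{halfultimate2}, and that under the joint hypotheses on $\KK$ both of those propositions are available at once.

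First I would invoke Proposition~\ref{halfultimate}, which supplies the forward direction ``$C_{\mB}(X)$ is a $\KK$-space $\Rightarrow \Gamma_{\mB}(X)\in\KK$'' using only the $\FF_1$-composability of $\KK$ together with the Tychonoff property of $X$ and the existence of a compact base for $\mB$ (both assumed). Then I would invoke Proposition~\ref{halfultimate2}, which supplies the reverse direction ``$\Gamma_{\mB}(X)\in\KK \Rightarrow C_{\mB}(X)$ is a $\KK$-space'' from the $\FF_1$-composability together with the $\FF_\omega$-steadiness of $\KK$ (the compact base and the Tychonoff property are not even needed here). Combining the two implications yields the desired biconditional.

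There is no substantive obstacle, since the bulk of the work has already been packaged into the two preceding propositions. The only point worth pausing on is why the stronger hypothesis of $\FF_\omega$-steadiness enters only in the reverse direction. This is because in Proposition~\ref{halfultimate2} one must encode the countable $\varepsilon$-parameter appearing in the basic neighborhoods $\langle B,\frac{1}{n+1}\rangle[\0]$ of $\0$; this is handled by joining the pulled-back filter $\Phi^{-1}(\Gamma_{\mB}(X))$ with the countably based filter $\mathcal{H}=\{M_n:n\in\omega\}^\uparrow$, and $\FF_\omega$-steadiness is precisely the property guaranteeing that such a join remains in $\KK$. In the forward direction no such join is needed, since the filter $\mN_{\0,C_\mB(X)}$ already carries that countable parameter internally and is merely pushed through the maps $\Phi$ and $\pi$.
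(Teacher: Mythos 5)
Your proposal matches the paper exactly: the theorem is stated there as an immediate consequence of Propositions~\ref{halfultimate} and~\ref{halfultimate2}, with no further argument needed. Your added remark on why $\FF_\omega$-steadiness enters only in the reverse direction is accurate and consistent with the construction in Proposition~\ref{halfultimate2}.
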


\begin{example}
For a cardinal $\kappa\geq \aleph_0$, we say that a filter $\mF$ on $C$ belongs to $\mathbb{T}_\kappa$ if for all $A\subset C$ such that $A\cap F\ne \emptyset$ holds for every element of $\mF$, there is a $B\in[A]^{\leq \kappa}$ such that $B\cap F\ne\emptyset$ for all $F\in\mF$. It can be shown that $\mathbb{T}_\kappa$ is a class of filters $\FF_1$-composable and $\FF_\omega$-steady. So, under the assumptions of the previous corollary, $C_\mB(X)$ has tightness less than or equal to $\kappa$ if and only if $\Gamma_\mB(X)\in\mathbb{T}_\kappa$, i.e., any $\mB$-covering of $X$ has a $\mB$-subcovering of cardinality $\leq\kappa$, a result originally due to McCoy and Ntantu~\cite{McCoybook}.
\end{example}

\begin{example}
Let $\mF$ be a proper filter on a set $C$, and consider the family 
\[\mathcal{M}:=\{A\subset C:\forall F\in\mF(A\cap F\ne\emptyset)\}.\]

Note that for any function $\varphi\colon\omega\to[2,\aleph_0]$, both classes of filters $\sel{\varphi}(\mathcal{M},\mathcal{M})$ and ${\Win{II}{\varphi}(\mathcal{M},\mathcal{M})}$ are $\FF_1$-composable. Thus, the directions ``property in $C_\mB(X)$ implies property in $X$'' of Proposition~\ref{prop1} concerning both $\sel{\varphi}$ and $\pl{II}$ follow from Proposition~\ref{halfultimate}. On the other hand, the converses follow from Proposition~\ref{halfultimate2} whenever $\varphi$ is a constant function, because in this case it can be proved that $\sel{\varphi}(\mathcal{M,M})$ and $\Win{II}{\varphi}(\mathcal{M,M})$ are also $\FF_{\omega}$-steady classes of filters.
However, we were not able to prove similar statements regarding $\pl{I}$ in the game $\game{\varphi}(\mathcal{M,M})$.
\end{example}	

\section{$\gamma$-productive spaces}\label{productive}

Gerlits and Nagy \cite{Gerlits1982} had introduced the concept of point-cofinite open coverings\footnote{Usually called $\gamma$-coverings in the literature.} in their analysis of Fr\'echet property in $C_p(X)$. 
Recall that a topological space $Y$ is Fr\'echet (resp. strictly Fr\'echet) if for each $y\in Y$ and for each $A\in\Omega_y$ there is a subset $B\subset A$ such that $B\in\Gamma_y$, where \[\Gamma_y:=\{A\subset Y:|A\setminus V|<\aleph_0\text{ for all }V\in\mN_{y,Y}\},\] (resp. if $\sone(\Omega_y,\Gamma_y)$ holds). We say that an infinite collection $\mU$ of proper open sets of $X$ is called a point-cofinite covering if for all $x\in X$ the set $\{U\in\mU:x\not\in\mU\}$ is finite, and we denote by $\Gamma$ the collection of all point-cofinite coverings of $X$ - particularly, note that $\Gamma\subset \Omega$. A space $X$ is called a $\gamma$-space if any nontrivial $\omega$-covering has a (countable) $\gamma$-subcovering. Then we have the following

\begin{theorem}
[Gerlits and Nagy\cite{Gerlits1982}]\label{frechetG} For a Tychonoff space $X$, the following are equivalent:
\begin{enumerate}
\item $X$ is a $\gamma$-space;
\item $\sone(\Omega,\Gamma)$ holds;
\item $C_p(X)$ is a strictly Fr\'echet space;
\item $C_p(X)$ is a Fr\'echet space.
\end{enumerate}
 
\end{theorem}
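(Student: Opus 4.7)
My plan is to run the cycle $(1)\Rightarrow(2)\Rightarrow(3)\Rightarrow(4)\Rightarrow(1)$. Since a strictly Fr\'echet space is Fr\'echet by definition, $(3)\Rightarrow(4)$ is immediate, leaving three nontrivial implications. Throughout, the bridge between ``covering statements in $X$'' and ``closure statements in $C_p(X)$'' will be the lemma stated in Section~\ref{born}, applied with $\mB=[X]^{<\aleph_0}$, for which $\mO_{\mB}=\Omega$.

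For $(1)\Rightarrow(2)$, given a sequence $(\mU_n)_{n\in\omega}$ of $\omega$-covers of $X$, I would first replace each $\mU_n$ by a nontrivial one (discarding $X$ when necessary), and then assemble the family
\[\mathcal{W}=\{U_0\cap\cdots\cap U_n: U_i\in\mU_i \text{ for } i\leq n,\ n\in\omega\},\]
which is still an $\omega$-cover since any finite $F\subset X$ can be simultaneously placed inside elements of finitely many $\mU_i$. Applying the $\gamma$-space property to $\mathcal{W}$ yields a countable $\gamma$-subcover $(W_k)_{k\in\omega}$; writing $W_k=U_0^k\cap\cdots\cap U_{n_k}^k$ and redistributing so that each coordinate index $n\in\omega$ is hit cofinally often, one extracts $V_n\in\mU_n$ with $\{V_n:n\in\omega\}\in\Gamma$.

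For $(2)\Rightarrow(3)$, fix $A\in\Omega_\0$ in $C_p(X)$. By item (2) of the lemma in Section~\ref{born}, each $\mU_n(A)=\{f^{-1}[(-\frac{1}{n+1},\frac{1}{n+1})]:f\in A\}$ is an $\omega$-cover; applying $\sone(\Omega,\Gamma)$ to $(\mU_n(A))_n$ produces $f_n\in A$ whose corresponding preimages form a $\gamma$-cover. Then $(f_n)_n$ lies eventually in every neighborhood $\langle F,\varepsilon\rangle[\0]$: choosing $N$ with $\frac{1}{N+1}<\varepsilon$ and using the $\gamma$-property to find $M$ such that $F\subset f_n^{-1}[(-\frac{1}{n+1},\frac{1}{n+1})]$ whenever $n\geq M$, any $n\geq\max\{N,M\}$ satisfies $f_n\in\langle F,\varepsilon\rangle[\0]$. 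Hence $\{f_n:n\in\omega\}\in\Gamma_\0$, proving the strict Fr\'echet property.

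For $(4)\Rightarrow(1)$, given a nontrivial $\omega$-cover $\mU$, form the set $\mA(\mU)\subset C_p(X)$ from item (1) of the same lemma, which belongs to $\Omega_\0$. The Fr\'echet property yields a sequence $(f_n)\subset\mA(\mU)$ converging to $\0$, and picking $U_n\in\mU$ with $f_n\!\upharpoonright(X\setminus U_n)\equiv 1$, the convergence $f_n(x)\to 0$ forces $x\in U_n$ for all but finitely many $n$, making $\{U_n:n\in\omega\}$ a $\gamma$-subcover of $\mU$. The main obstacle I expect is the bookkeeping in $(1)\Rightarrow(2)$: a naive selection from the factors $U_i^k$ of each $W_k$ can miss some indices $n$ entirely or fail to inherit point-cofiniteness, so the redistribution must be arranged so that every $\mU_n$ contributes along a cofinal set of indices in a way that preserves the $\gamma$-property.
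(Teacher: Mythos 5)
The paper states this theorem without proof, citing Gerlits and Nagy, so the comparison is with the standard argument from the literature. Your implications $(2)\Rightarrow(3)$, $(3)\Rightarrow(4)$ and $(4)\Rightarrow(1)$ are correct and are essentially the classical ones (minor point: strict Fr\'echetness is $\sone(\Omega_{\0},\Gamma_{\0})$, so in $(2)\Rightarrow(3)$ you should run the argument on covers $\mU_n(A_n)$ built from a \emph{sequence} of sets $A_n\in\Omega_{\0}$ rather than from a single $A$; the same computation goes through).

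The genuine gap is in $(1)\Rightarrow(2)$, exactly at the step you flag and do not close. Your family $\mathcal{W}$ of finite intersections contains $\mU_0$ as a subfamily, and $\mU_0$ is itself an $\omega$-cover, so nothing prevents the $\gamma$-subcover $\{W_k\}_{k\in\omega}$ supplied by the $\gamma$-property from lying entirely inside $\mU_0$ (or, more generally, from having all lengths $n_k$ bounded by some $N$). In that case no element of $\mU_n$ with $n>N$ is ever produced, and no ``redistribution'' of the factors of the $W_k$ can create one; this is not bookkeeping but the actual content of the Gerlits--Nagy theorem. The classical repair is to pre-process the cover before invoking the $\gamma$-property: decompose $\mathcal{W}=\bigcup_{n\in\omega}\mathcal{W}_n$ with $\mathcal{W}_n$ consisting of intersections of length $n$, arranged so that each initial union $\bigcup_{m\leq n}\mathcal{W}_m$ fails to be an $\omega$-cover, as witnessed by a finite set $F_n\subset X$ contained in no member of it. Since every point of $F_n$ lies in all but finitely many members of a point-cofinite cover, all but finitely many $W_k$ contain $F_n$ and hence lie outside $\bigcup_{m\leq n}\mathcal{W}_m$; this forces $n_k\to\infty$, after which your extraction of $V_n\in\mU_n$ works. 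Constructing such a decomposition (and dealing with the case in which it cannot be constructed because some finite stage already behaves like an $\omega$-cover) is where the real work of the theorem lies, and your proposal does not supply it.
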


In \cite{jordan2007}, Jordan obtain a variation of the above theorem as a corollary of Theorem \ref{ultimate}, with appropriate definitions for Fr\'echet filters and \emph{strongly}\footnote{Strictly Fr\'echet spaces and strongly Fr\'echet spaces are not formally the same: the later were independently introduced by Michael~\cite{Michaelfrechet} and Siwiec~\cite{Siwiec}. Although the definition is similar, in the strong case there is the additional requirement for the sequence $(A_n)_{n\in\omega}$ with $y\in\bigcap_{n\in\omega}\overline{A_n}$ to be decreasing.} Fr\'echet filters, which turns out to be $\FF_1$-composable and $\FF_\omega$-steady classes of filters.

However, we shall pay more attention to the following characterization. 
\begin{theorem}[Jordan and Mynard\cite{jordan2004}]\label{prodfrechet} A topological space $Y$ is productively Fr\'echet if and only if $Y\times Z$ is a Fr\'echet space for any strongly Fr\'echet space $Z$.
\end{theorem}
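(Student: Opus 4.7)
The forward direction is immediate, since every strongly Fr\'echet space is Fr\'echet: assuming the standard convention that $Y$ is \emph{productively Fr\'echet} when $Y\times Z$ is a Fr\'echet space for every Fr\'echet space $Z$, the right-hand condition is just a restriction of the left.

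For the converse I would argue by contraposition. Suppose there is a Fr\'echet space $W$, a point $(y_{0},w_{0})\in Y\times W$ and a set $A\subset Y\times W$ with $(y_{0},w_{0})\in\overline{A}$ such that no sequence from $A$ converges to $(y_{0},w_{0})$. The task is to manufacture a strongly Fr\'echet space $Z$ together with an analogous $A^{*}\subset Y\times Z$ witnessing that $Y\times Z$ fails to be a Fr\'echet space.

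The naive first attempt is to restrict $W$ to a countable subspace $W_{0}$ containing $w_{0}$ and the points realizing the accumulation of $A$ at $(y_{0},w_{0})$; since $W$ is Fr\'echet, $W_{0}$ may be chosen to consist of $w_{0}$ together with countably many sequences converging to it. This, however, typically produces something like the sequential fan, which is Fr\'echet but \emph{not} strongly Fr\'echet, so one must refine further. Here I would invoke the filter calculus of Section~\ref{filters}: the topology at $w_{0}$ is encoded by the neighborhood filter $\mN_{w_{0},W}$, and the failure at $(y_{0},w_{0})$ translates to a non-composability statement between $\mN_{w_{0},W}$ and the filter trace of $A$ on the $y$-slices. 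I would refine $\mN_{w_{0},W}$ to a countably based filter $\mF$ that is strongly Fr\'echet in Jordan's filter-theoretic sense (every decreasing countable family of $\mF$-big sets admits a diagonal selection along which $\mF$ converges) while preserving the obstruction coming from $A$. Retopologizing $W_{0}$ so that $\mF$ becomes the neighborhood filter at $w_{0}$, and leaving the remaining points isolated, would yield the required $Z$, and $A^{*}:=A\cap(Y\times W_{0})$ would witness the failure.

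The main obstacle is precisely this refinement step: the filter must be made strongly Fr\'echet \emph{without} destroying the failure in $Y\times Z$. The cleanest route is probably to isolate the class $\KK_{\mathrm{sFr}}$ of strongly Fr\'echet filters, verify that it is $\FF_{1}$-composable and $\FF_{\omega}$-steady, and use the filter-theoretic machinery around Theorem~\ref{ultimate} to translate the product property on $Y$ into an intrinsic property of $\mN_{y_{0},Y}$. The converse then reduces to checking that whenever $\mN_{y_{0},Y}$ fails this intrinsic property, the failure can be realized by a strongly Fr\'echet $Z$; this is exactly where the $\FF_{\omega}$-steadiness of $\KK_{\mathrm{sFr}}$ does the work, since it guarantees that the countably based refinement $\mF$ can be built without breaking the composition with the bad filter coming from $A$.
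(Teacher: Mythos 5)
This theorem is quoted from Jordan and Mynard \cite{jordan2004}; the paper does not reprove it, so the only question is whether your argument stands on its own, and it does not. The first problem is definitional: you take ``productively Fr\'echet'' to mean ``$Y\times Z$ is Fr\'echet for every Fr\'echet $Z$,'' but the paper, immediately after the theorem statement, fixes the filter-theoretic meaning: every neighborhood filter $\mN_{y,Y}$ is a productively Fr\'echet \emph{filter}, i.e., for every strongly Fr\'echet filter $\mathcal{H}$ meshing with $\mN_{y,Y}$ there is a countably based filter refining $\mN_{y,Y}\vee\mathcal{H}$. With that definition the forward implication is not ``immediate'': one must show that if a set $A$ clusters at $(y_0,z_0)$ in $Y\times Z$ with $Z$ strongly Fr\'echet, then transporting $\mN_{y_0,Y}$ and $\mN_{z_0,Z}$ to filters meshing on $A$ and invoking the countably based refinement produces a sequence in $A$ converging to $(y_0,z_0)$. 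You have not addressed this direction at all; under the paper's definition it is the half that uses the hypothesis.

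The second problem is that the key step of your converse is exactly the gap you name. Starting from a Fr\'echet witness $W$ and proposing to ``refine $\mN_{w_0,W}$ to a countably based, strongly Fr\'echet filter while preserving the obstruction'' cannot work as stated: enlarging the neighborhood filter at $w_0$ weakens the closure operator, so $A$ need no longer accumulate at $(y_0,w_0)$, and if such an obstruction-preserving countably based refinement always existed, the distinction between Fr\'echet and strongly Fr\'echet product partners --- the entire content of the theorem --- would collapse. The workable route runs the other way: from the assumed failure of the \emph{filter} property one already has a strongly Fr\'echet filter $\mathcal{H}$ meshing with $\mN_{y_0,Y}$ such that $\mN_{y_0,Y}\vee\mathcal{H}$ admits no countably based refinement; one realizes $\mathcal{H}$ as the neighborhood filter of a single non-isolated point of a space $Z$ (all other points isolated, so $Z$ is strongly Fr\'echet by construction) and checks that a diagonal-type subset of $Y\times Z$ witnesses the failure of Fr\'echetness at that point. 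Appealing to $\FF_\omega$-steadiness of the class of strongly Fr\'echet filters does not substitute for this construction: steadiness governs joins with countably based filters and is not the mechanism that produces $Z$.
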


In the above theorem, the sentence ``$Y$ is productively Fr\'echet'' has a very precise meaning -- it is a space such that all of its neighborbood filters belongs to the class of productively Fr\'echet filters:
\begin{center}
 $\mF\in\FF(C)$ is productively Fr\'echet if for each strongly Fr\'echet filter $\mathcal{H}\in\FF(C)$ such that $F\cap H\ne\emptyset$ for all $(F,H)\in\mF\times\mathcal{H}$ there is a filter $\mG\in\FF_{\omega}(C)$ refining $\mF\vee \mathcal{H}$.
\end{center}

The considerations above justify the following definition of Jordan \cite{jordan2007}: a Tychonoff space $X$ is called $\gamma$-productive if the filter $\Gamma(X)$ is productively Fr\'echet. Then, by using the fact that productively Fr\'echet filters are $\FF_1$-composable and $\FF_\omega$-steady, Jordan proves the following.
\begin{proposition}[Jordan \cite{jordan2007}]\label{corjordan}
If $X$ is $\gamma$-productive, then $X\times Y$ is a $\gamma$-space for all $\gamma$-space $Y$.
\end{proposition}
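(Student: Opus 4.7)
The approach is to reduce the conclusion, via the filter machinery of Theorems~\ref{ultimate}, \ref{prodfrechet}, and \ref{frechetG}, to a comparison between $\Gamma(X\times Y)$ and a natural product of $\Gamma(X)$ with $\Gamma(Y)$. First I would translate the hypotheses: by definition, $X$ being $\gamma$-productive says $\Gamma(X)$ is productively Fr\'echet, and since this class of filters is $\FF_1$-composable and $\FF_\omega$-steady, Theorem~\ref{ultimate} (with $\mB=[X]^{<\omega}$) yields that $C_p(X)$ is a productively Fr\'echet space. Dually, Theorem~\ref{frechetG} together with Theorem~\ref{ultimate} shows that $Y$ being a $\gamma$-space is equivalent to $\Gamma(Y)$ being a strongly Fr\'echet filter, using that on the homogeneous space $C_p(Y)$ the strictly Fr\'echet and strongly Fr\'echet properties coincide.

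Next, Theorem~\ref{prodfrechet} gives that $C_p(X)\times C_p(Y)$ is Fr\'echet. Translating back into filters, after lifting $\Gamma(X)$ and $\Gamma(Y)$ to cylindrical filters on $\tau_X\times\tau_Y$, their join $\Gamma(X)\vee\Gamma(Y)$ is refined by a countably based filter and is therefore Fr\'echet. The $\FF_1$-composability of the Fr\'echet class then transports this property along the function $\mu\colon\tau_X\times\tau_Y\to\tau_{X\times Y}$, $(U,V)\mapsto U\times V$, producing a Fr\'echet filter $\mu(\Gamma(X)\vee\Gamma(Y))$ on $\tau_{X\times Y}$. If this image coincides with $\Gamma(X\times Y)$, Theorem~\ref{frechetG} applied to $X\times Y$ finishes the proof.

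The main obstacle is precisely this last coincidence. An open neighborhood of a finite set $F\subset X\times Y$ need not contain any open rectangle $U\times V$ with $\pi_X(F)\subset U$ and $\pi_Y(F)\subset V$, so $\mu(\Gamma(X)\vee\Gamma(Y))$ is strictly finer than $\Gamma(X\times Y)$ in general, and the Fr\'echet property need not descend to coarser filters. The natural way around this should be a combinatorial refinement lemma: every $\omega$-cover of $X\times Y$ admits a refinement by rectangles which is still an $\omega$-cover. Such a lemma would align the two filters sufficiently to transport the Fr\'echet property from $\mu(\Gamma(X)\vee\Gamma(Y))$ down to $\Gamma(X\times Y)$, closing the argument.
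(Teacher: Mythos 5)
Your route is genuinely different from the paper's and can probably be made to work, but as written it has two real gaps, one of which you explicitly leave open. First, the inference ``$\Gamma(X)\vee\Gamma(Y)$ is refined by a countably based filter and is therefore Fr\'echet'' is not valid: a filter admitting a countably based refinement need not be Fr\'echet, and in any case productive Fr\'echetness only hands you such a refinement for each individual meshing strongly Fr\'echet filter. What you actually need is the filter-level form of Theorem~\ref{prodfrechet} due to Jordan and Mynard (the join of a productively Fr\'echet filter with a meshing strongly Fr\'echet filter is Fr\'echet), applied after lifting $\Gamma(X)$ and $\Gamma(Y)$ to $\tau_X\times\tau_Y$ along the projections and checking via $\FF_1$-composability that the lifts stay in their respective classes; detouring through the space $C_p(X)\times C_p(Y)$ and ``translating back'' does not obviously land you on the filter $\Gamma(X)\vee\Gamma(Y)$. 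Second, the rectangle refinement lemma you postulate is indeed the missing ingredient, and it is true, but it needs a proof: for finite $A\subset X$, $B\subset Y$ and open $W\supset A\times B$, choose boxes $U_{ab}\times V_{ab}\subset W$ around each $(a,b)$ and set $U=\bigcup_{a\in A}\bigcap_{b\in B}U_{ab}$ and $V=\bigcup_{b\in B}\bigcap_{a\in A}V_{ab}$; then $A\times B\subset U\times V\subset W$. Applying this to $\pi_X(F)\times\pi_Y(F)$ for each finite $F\subset X\times Y$ refines any $\omega$-cover to a rectangular one, and a point-cofinite subfamily of the refinement can be replaced, member by member, with containing elements of the original cover.

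The paper avoids both difficulties by never comparing $\Gamma(X\times Y)$ with a product of filters. It uses the homeomorphism $C_{\mB\oplus\mL}(X+Y)\cong C_{\mB}(X)\times C_{\mL}(Y)$ together with Theorem~\ref{prodfrechet} to conclude that the disjoint sum $X+Y$ is a $\gamma$-space for the sum bornology, and then transfers this to $X\times Y$ by Lemma~\ref{trick} of Miller, Tsaban and Zdomskyy, since the $\gamma$ property is hereditary for closed subspaces, preserved by finite powers, and $X\times Y$ embeds as a closed subspace of $(X+Y)^{2}$. Your direct approach gives more explicit information about how $\omega$-covers of the product behave; the paper's approach is shorter and generalizes verbatim to arbitrary bornologies with compact bases, which is what the final corollary needs.
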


By extending this definition to the filter $\Gamma_{\mathcal{B}}(X)$, we will prove that the \emph{productivity} of $\gamma$-productive spaces is far more strong. In order to do this, we will follow the indirect approach presented by Miller, Tsaban and Zdomskyy in \cite{tsaban2016} to derive Proposition~\ref{corjordan}.

Let $X$ be a Tychonoff space and let $\mathcal{B}$ be a bornology  on $X$. We say that an infinite collection $\mU$ of proper open sets of $X$ is a $\mB$-cofinite covering if for all $B\in\mB$ the set $\{U\in\mU:B\not\subset U\}$ is finite, and we denote by $\Gamma_{\mB}$ the collection of all $\mB$-cofinite coverings of $X$. Naturally, we say that $X$ is a $\gamma_\mB$-space if any nontrivial $\mB$-covering has a (countable) $\mB$-cofinite subcovering.

McCoy and Ntantu~\cite{McCoybook} have introduced $\mB$-cofinite open coverings in their generalization of Theorem~\ref{frechetG}, calling them as \emph{$\mB$-sequences} there. Although they just stated items (\ref{enrolo1}) and (\ref{enrolo4}) of the theorem below, their arguments, which are adapted from Gerlits and Nagy~\cite{Gerlits1982}, can be used to prove the following.

\begin{theorem}[McCoy and Ntantu~\cite{McCoybook}]
Let $X$ be a Tychonoff space and let $\mB$ be a bornology with a compact base on $X$. The following are equivalent:
\begin{enumerate}
\item\label{enrolo1} $X$ is a $\gamma_\mB$-space;
\item $\sone(\mathcal{O_B},\Gamma_\mathcal{B})$ holds;
\item $C_{\mB}(X)$ is strictly Fr\'echet;
\item\label{enrolo4} $C_{\mB}(X)$ is Fr\'echet.
\end{enumerate}
\end{theorem}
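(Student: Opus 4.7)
The plan is to establish the four conditions via the cyclic chain $(1)\Rightarrow(2)\Rightarrow(3)\Rightarrow(4)\Rightarrow(1)$. The implication $(3)\Rightarrow(4)$ is immediate, and the other three all ride on the dictionary supplied by the lemma following Proposition~\ref{prop1}: to each nontrivial $\mU\in\mO_{\mB}$ one associates $\mA(\mU)\in\Omega_{\0}$ in $C_{\mB}(X)$, and to each $A\subset C_{\mB}(X)$ with $\0\in\overline{A}$ one associates $\mU_n(A)\in\mO_{\mB}$.

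For $(2)\Rightarrow(3)$, given $(A_n)_{n\in\w}$ with $\0\in\overline{A_n}$, I would form the $\mB$-covers $\mU_n(A_n)$, apply $\sone(\mO_\mB,\Gamma_\mB)$ to pick $V_n=f_n^{-1}[(-\tfrac{1}{n+1},\tfrac{1}{n+1})]$ with $f_n\in A_n$ and $\{V_n:n\in\w\}\in\Gamma_\mB$, and verify that $(f_n)\in\Gamma_{\0}$: given a basic neighborhood $\langle B,\varepsilon\rangle[\0]$, for cofinitely many $n$ one has both $B\subset V_n$ and $\tfrac{1}{n+1}<\varepsilon$, whence $f_n\in\langle B,\varepsilon\rangle[\0]$. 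The strictly Fr\'echet property propagates from $\0$ to every point by homogeneity of $C_{\mB}(X)$. The reverse direction $(4)\Rightarrow(1)$ starts from a nontrivial $\mU\in\mO_\mB$, invokes the Fr\'echet hypothesis to produce a sequence $(f_n)\subset\mA(\mU)$ converging to $\0$, takes $U_n\in\mU$ satisfying $f_n\upharpoonright(X\menos U_n)\equiv 1$, and observes that convergence forces $B\subset U_n$ for cofinitely many $n$; a pigeonhole argument (if $U_n=U$ for infinitely many $n$, then $U\supset\bigcup\mB=X$, contradicting nontriviality) then permits thinning to an infinite $\mB$-cofinite subcover.

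The heart of the argument is $(1)\Rightarrow(2)$, which requires the Gerlits--Nagy diagonalization adapted to bornologies. Given a sequence $(\mU_n)_{n\in\w}$ of nontrivial $\mB$-covers, I would form $\mV:=\{U_1\cap\cdots\cap U_k:k\in\w,\,U_i\in\mU_i\}$, which is again a nontrivial $\mB$-cover of $X$. By hypothesis, $\mV$ admits a countable $\mB$-cofinite subcover $\{W_k:k\in\w\}$, where $W_k=U_1^{(k)}\cap\cdots\cap U_{n_k}^{(k)}$ with $U_i^{(k)}\in\mU_i$; by passing to a subfamily we may assume $n_k\to\infty$. Setting $k(i):=\min\{k:n_k\geq i\}$ and $U_i:=U_i^{(k(i))}\in\mU_i$ produces the desired $\sone$-selection: for any $B\in\mB$, cofiniteness of $\{W_k\}$ yields an $N$ with $B\subset W_k\subset U_i^{(k)}$ whenever $k\geq N$ and $i\leq n_k$, and $k(i)\to\infty$ then forces $B\subset U_i$ for all sufficiently large $i$. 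The residual subtlety — guaranteeing that $\{U_i\}$ is infinite and consists of proper open sets — is handled by the nontriviality of each $\mU_i$ together with the standing property $\bigcup\mB=X$ of the bornology.
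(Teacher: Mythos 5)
The paper itself does not prove this theorem: it is quoted from McCoy and Ntantu with the remark that their arguments, adapted from Gerlits and Nagy, establish it. So your attempt has to stand on its own. Three of your four implications do: $(3)\Rightarrow(4)$ is immediate, and your $(2)\Rightarrow(3)$ and $(4)\Rightarrow(1)$ correctly exploit the dictionary $\mU\mapsto\mA(\mU)$, $A\mapsto\mU_n(A)$ from the lemma following Proposition~\ref{prop1}; in particular your pigeonhole observation in $(4)\Rightarrow(1)$ (a set repeated infinitely often would have to contain every $B\in\mB$, hence equal $X=\bigcup\mB$) is exactly the right way to see that the extracted family is infinite. (There are the usual set-versus-sequence and trivial-cover conventions to police in $(2)\Rightarrow(3)$, but these are standard.)

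The genuine gap is in $(1)\Rightarrow(2)$, at the sentence ``by passing to a subfamily we may assume $n_k\to\infty$.'' Nothing justifies this. The $\gamma_{\mB}$-property hands you \emph{some} countable $\mB$-cofinite subcover $\{W_k\}$ of $\mV$, and the lengths of the representations $W_k=U_1^{(k)}\cap\cdots\cap U_{n_k}^{(k)}$ may well be bounded: for instance, if $\mU_1$ already happens to contain a $\mB$-cofinite subcover $\{G_j\}$, then $\{G_j\}\subset\mV$ is a legitimate output with $n_k\equiv 1$, and it carries no information whatsoever about $\mU_2,\mU_3,\dots$; no subfamily of it has unbounded lengths, and an element of $\mV$ need not admit any longer representation. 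So in the bounded case your construction simply fails to produce a selection from all but finitely many of the $\mU_i$. This is precisely the point where the Gerlits--Nagy argument (and its McCoy--Ntantu adaptation) needs its main idea --- one must either rig the auxiliary cover so that unbounded lengths are forced, or route the hard combinatorial step through the function-space side of the equivalence --- and it cannot be dismissed in one clause. Everything downstream of that clause (the definition of $k(i)$, the verification that $B\subset U_i$ eventually, the infiniteness of $\{U_i\}$) is fine \emph{conditional} on $n_k\to\infty$, but the conditional is the whole theorem.
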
 

\begin{remark}
In particular, all of the above conditions are equivalent to require that $C_{\mB}(X)$ is strongly Fr\'echet.
\end{remark}

Now, we will say that a Tychonoff space $X$ with a bornology $\mathcal{B}$ is $\gamma_\mB$-productive if the filter $\Gamma_{\mB}(X)$ is productively Fr\'echet -- note that for $\mB=[X]^{<\aleph_0}$, one obtains the original definition of $\gamma$-productive spaces.

Since we will work with the product of spaces endowed with different bornologies, we need to describe their behavior under products.

\begin{proposition} Given a family $\{X_t:t\in T\}$ of pairwise disjoint topological spaces, consider for each $t\in T$ a set $\mB_t\subset\wp(X_t)$. Let $\mB_0=\{\prod_{t\in T}B_t:\forall t(B_t\in \mB_t)\}$ and $\mB_1=\{\bigsqcup_{t\in T}B_t:B_t\in\mB_t$ for finitely many $t\in T$, $B_t=\emptyset$ otherwise$\}$. If $\mB_t$ is a (compact) base for each $t\in T$, then $\mB_0$ and $\mB_1$ are (compact) bases for bornologies on $\prod_{t\in T}X_t$ and $\sum_{t\in T}X_t$, respectively. 
\end{proposition}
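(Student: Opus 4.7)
The plan is to verify directly, for each of the two cases, the axioms of a (compact) base for a bornology: the natural downward closure $\mC_i:=\{A:\exists B\in\mB_i,\, A\subset B\}$ must be an ideal that covers the underlying space, each element of $\mB_i$ must be compact (under the stronger hypothesis), and $\mB_i$ must be cofinal in $\mC_i$. Throughout, let $\mC_t$ denote the bornology on $X_t$ for which $\mB_t$ is a (compact) base.

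For the product case, compactness of any $\prod_t B_t\in\mB_0$ follows at once from Tychonoff's theorem. To see that $\mC_0$ covers $\prod_t X_t$, I would fix a point $(x_t)_t$ and use that each $\mC_t$ covers $X_t$, together with the cofinality of $\mB_t$ in $\mC_t$, to select $B_t\in\mB_t$ with $x_t\in B_t$; then $(x_t)_t\in\prod_t B_t\in\mB_0\subset\mC_0$. The downward closure $\mC_0$ is hereditary by construction, and closure under finite unions reduces to finding, for any two elements $\prod_t B^{(1)}_t$ and $\prod_t B^{(2)}_t$ of $\mB_0$, a single element of $\mB_0$ containing their union: for each $t$ pick $B_t\in\mB_t$ majorizing $B^{(1)}_t\cup B^{(2)}_t\in\mC_t$ (using cofinality), and form $\prod_t B_t$. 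Cofinality of $\mB_0$ in $\mC_0$ is then immediate from the definition of $\mC_0$.

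The sum case proceeds analogously, with the extra bookkeeping of keeping track of the finite support. Each $\bigsqcup_{t\in F}B_t\in\mB_1$ (with $F\subset T$ finite) is a finite disjoint union of compact subsets of the clopen pieces $X_t\subset\sum_t X_t$, and is therefore compact in the sum topology. Covering is immediate by fixing $x\in X_{t_0}$ and taking a singleton-support element based on some $B_{t_0}\in\mB_{t_0}$ containing $x$. Closure under finite unions is handled coordinatewise on the union of the two finite supports, with each coordinate absorbed into $\mB_t$ by cofinality as before, and cofinality of $\mB_1$ in $\mC_1$ is again immediate. Overall I expect no significant obstacle: both parts are routine verification, with Tychonoff's theorem being the only nontrivial input and only in the product case; the mildly subtle point in the sum case is simply that each $X_t$ is clopen in $\sum_t X_t$, which is what makes a finite disjoint union of compact pieces compact in the sum.
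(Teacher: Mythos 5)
Your proof is correct. The paper states this proposition without any proof, treating it as routine verification; your argument (covering, upward directedness via coordinatewise majorization using the directedness of each $\mB_t$, Tychonoff's theorem for compactness in the product, and the compactness of a finite union of compact pieces in the sum) is precisely the standard argument the authors evidently have in mind, so there is nothing to compare against and no gap to report.
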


We denote by $\bigotimes_{t\in T}\mB_t$ and $\bigoplus_{t\in T}\mB_t$ the bornologies generated by the bases $\mB_0$ and $\mB_1$ in the above proposition, respectively.

\begin{proposition}
Let $\{X_t:t\in T\}$ be a family of topological spaces, and for each $t\in T$ let $\mB_t$ be a bornology on $X_t$. Then $C_{\bigoplus_{t\in T}\mB_t}(\sum_{t\in T}X_t)$ is homeomorphic to $\prod_{t\in T}C_{\mB_t}(X_t)$.
\end{proposition}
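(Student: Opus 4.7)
The plan is to exhibit the natural assembly bijection and verify that the two bornological topologies match at the zero function; translation invariance of the group structure then upgrades this to a global homeomorphism.

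First I would define $\Phi\colon C(\sum_{t\in T}X_t)\to\prod_{t\in T}C(X_t)$ by $\Phi(f)=(f\upharpoonright X_t)_{t\in T}$. By the universal property of the topological coproduct, any family $(f_t)_{t\in T}$ of continuous functions glues to a unique continuous function on $\sum_{t\in T}X_t$, so $\Phi$ is a bijection; it is additive (in fact $\RR$-linear), and both $C_{\bigoplus_{t\in T}\mB_t}(\sum_{t\in T}X_t)$ and $\prod_{t\in T}C_{\mB_t}(X_t)$ are topological groups under pointwise addition. Hence it suffices to show that $\Phi$ identifies a neighborhood base of $\0$ with a neighborhood base of $(\0_t)_{t\in T}$.

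Second, I would compare bases. On the left, a basic neighborhood of $\0$ has the form $\langle B,\varepsilon\rangle[\0]$, where by the preceding proposition we may take $B=\bigsqcup_{t\in F}B_t$ with $F\in[T]^{<\w}$ and $B_t\in\mB_t$. Since the summands are pairwise disjoint inside $\sum_{t\in T}X_t$, the condition $f\in\langle B,\varepsilon\rangle[\0]$ unfolds as ``$f\upharpoonright X_t\in\langle B_t,\varepsilon\rangle[\0_t]$ for every $t\in F$'', which is precisely the statement that $\Phi(f)$ lies in the product-basic open set $\prod_{t\in F}\langle B_t,\varepsilon\rangle[\0_t]\times\prod_{t\notin F}C_{\mB_t}(X_t)$. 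Conversely, given a basic product neighborhood specified by data $(F,B_t,\varepsilon_t)_{t\in F}$, setting $B:=\bigsqcup_{t\in F}B_t$ and $\varepsilon:=\min_{t\in F}\varepsilon_t>0$ yields $\langle B,\varepsilon\rangle[\0]\subseteq \Phi^{-1}$ of that neighborhood. Thus each basis refines the other through $\Phi$.

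Finally, since $\Phi$ is a group isomorphism that is bi-continuous at $\0$, translation invariance forces $\Phi$ to be a global homeomorphism. I do not expect any genuine obstacle: the only piece requiring care is that the preceding proposition legitimately produces a base for $\bigoplus_{t\in T}\mB_t$ consisting of finite disjoint unions of base elements of the individual $\mB_t$'s (and that the empty-support element causes no degeneracy). Once this is in hand, everything is just an unfolding of the definitions, matching ``finite support in $T$'' on the bornology side with ``finitely many nontrivial coordinates'' on the product side.
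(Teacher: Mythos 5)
Your proposal is correct and uses exactly the same map as the paper, namely $f\mapsto(f\upharpoonright X_t)_{t\in T}$; the paper simply asserts this map is continuous with continuous inverse, while you spell out the verification by matching neighborhood bases at $\0$ and invoking translation invariance of the group structure. This is a legitimate and more detailed rendering of the same argument.
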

\begin{proof}
Note that the map
\[C_{\bigoplus_{t\in T}\mB_t}\left(\sum_{t\in T}X_t\right)\ni f\longmapsto (f\upharpoonright X_t)_{t\in T}\in\prod_{t\in T}C_{\mB_t}(X_t)\]
is continuous and it has a continuous inverse.
\end{proof}

\begin{remark}Particularly, it follows from the previous proposition that 
\begin{equation}
C_p\left(\sum_{t\in T}X_t\right)\textnormal{is homeomorphic to}\prod_{t\in T}C_p(X_t),
\end{equation}
and, if each $X_t$ is a Hausdorff space, then 
\begin{equation}
C_k\left(\sum_{t\in T}X_t\right)\textnormal{is homeomorphic to}\prod_{t\in T}C_k(X_t).
\end{equation}

For brevity, if $X_t=X$ and $\mB_t=\mB$ for all $t\in T$, we will write $\mB^{|T|}$ instead of $\bigotimes_{t\in T}\mB$.\end{remark}

The next lemma will be very useful later.
\begin{lemma}
[Miller, Tsaban and Zdomskyy~\cite{tsaban2016}]\label{trick} Let $\mathfrak{P}$ be a topological property hereditary for closed subspaces and preserved under finite power. Then for any pair of topological spaces $X$ and $Y$, $X\times Y$ has the property $\mathfrak{P}$ provided that $X+ Y$ has the property $\mathfrak{P}$.
\end{lemma}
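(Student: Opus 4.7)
The plan is to exploit the standard identification of the square of a topological sum with the sum of all pairwise products, which makes each ``mixed'' product $X\times Y$ appear as a clopen, and in particular closed, subspace of $(X+Y)^2$. Since $\mathfrak{P}$ is preserved under finite power and inherited by closed subspaces, this chain gives the conclusion.

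More concretely, suppose $X+Y$ enjoys $\mathfrak{P}$. First I would apply the hypothesis that $\mathfrak{P}$ is preserved under finite power to conclude that $(X+Y)^2=(X+Y)\times(X+Y)$ also enjoys $\mathfrak{P}$. Next I would observe the canonical homeomorphism
\[
(X+Y)\times(X+Y)\;\cong\;(X\times X)+(X\times Y)+(Y\times X)+(Y\times Y),
\]
which follows at once from the fact that $X$ and $Y$ are clopen subsets of the sum $X+Y$, and that in a topological sum products distribute over disjoint unions coordinatewise.

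With this decomposition in hand, the subspace $X\times Y$ sits inside $(X+Y)^2$ as one of the four summands, hence as a clopen subspace and in particular as a closed subspace. Applying the hypothesis that $\mathfrak{P}$ is hereditary for closed subspaces to this embedding yields that $X\times Y$ has property $\mathfrak{P}$, as desired.

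There is essentially no obstacle here beyond verifying the clopen-summand decomposition of $(X+Y)^2$; the argument is a short two-step application of the two closure properties of $\mathfrak{P}$, first upward to the square and then downward to one of its clopen pieces. The only mild care is to record that ``closed'' suffices even though the summand $X\times Y$ is actually clopen, which is exactly why the heredity hypothesis on $\mathfrak{P}$ is stated only for closed subspaces rather than requiring full subspace-heredity.
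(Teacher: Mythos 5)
Your argument is correct and is precisely the standard one from the cited source (the paper itself states this lemma without proof, attributing it to Miller, Tsaban and Zdomskyy): square the sum, decompose $(X+Y)^2$ into the four clopen product summands, and pull $\mathfrak{P}$ down to the closed summand $X\times Y$. No gaps; the only implicit convention is that $X$ and $Y$ are taken disjoint (or replaced by disjoint copies) so that $X+Y$ is the usual topological sum.
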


The first three items in the next proposition states that the property ``having a bornology $\mB$ with a compact base such that it is a $\gamma_\mB$-space'' satisfies the conditions of the previous lemma.
\begin{proposition}
Let $X$ be a topological space and let $\mB$ be a bornology on $X$.
\begin{enumerate}[(a)]
\item If $Y\subset X$, then $\mB_Y:=\{B\cap Y:B\in \mB\}$ is a bornology on $Y$. If $Y$ is closed and $\mB$ has a compact base on $X$, then $\mB_Y$ has a compact base on $Y$.
\item If $X$ is a $\gamma_\mB$-space and $Y\subset X$ is closed, then $Y$ is a $\gamma_{\mB_Y}$-space.
\item If $X$ is a $\gamma_\mB$-space and $\mB$ has a compact base, then $X^n$ is a $\gamma_{\mB^n}$-space for any $n\in\omega$.
\item\label{voltagama} If $X$ is a $\gamma_\mB$-space and $Y$ is a $\gamma_{\mL}$-space for a bornology $\mL$ in $Y$ such that $X\times Y$ is a $\gamma_{\mB\otimes\mL}$-space, then $X\sqcup Y$ is a $\gamma_{\mB\oplus\mL}$-space.
\end{enumerate}
\end{proposition}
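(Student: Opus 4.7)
My plan is to handle the four items in sequence, with (a) and (b) being straightforward bookkeeping and (c) and (d) carrying the real content. For (a), I would verify directly that $\mB_Y := \{B \cap Y : B \in \mB\}$ covers $Y$, is closed under finite unions (by distributivity), and is closed under subsets of $Y$ (since any $A \subset B \cap Y$ with $A \subset Y$ is itself in $\mB$ by the ideal property), so it is a bornology; the compact-base claim uses that when $Y$ is closed, $K \cap Y$ is closed in the compact $K$ and hence compact. For (b), given a nontrivial $\mB_Y$-covering $\mU$ of $Y$, I would lift each $U \in \mU$ to some open $V_U \subset X$ with $V_U \cap Y = U$ and form $\mathcal{W} := \{V_U \cup (X \setminus Y) : U \in \mU\}$. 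This is a nontrivial open $\mB$-covering of $X$ (using that $Y$ is closed), since $V_U \cup (X \setminus Y) = X$ would force $Y \subset V_U$, i.e.\ $U = Y$. Applying $\gamma_\mB$ produces a countable $\mB$-cofinite subcollection, which upon restriction to $Y$ gives a $\mB_Y$-cofinite subcover of $\mU$.

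For (c), the key tool is Wallace's theorem (the multi-factor tube lemma): whenever $K \subset X$ is compact and $W$ is open in $X^n$ with $K^n \subset W$, there is an open $V \subset X$ with $K \subset V$ and $V^n \subset W$. Fix a compact base $\mB^{\ast}$ for $\mB$ and let $\mU$ be a nontrivial $\mB^n$-covering of $X^n$. For each $B \in \mB^{\ast}$ and each $W \in \mU$ with $B^n \subset W$, Wallace's theorem yields an open $V_{B,W} \subset X$ with $B \subset V_{B,W}$ and $V_{B,W}^n \subset W$; moreover $V_{B,W} \ne X$, for otherwise $W = X^n$ would contradict nontriviality. The family $\mathcal{V} := \{V_{B,W}\}$ is a nontrivial $\mB$-covering of $X$ by cofinality of $\mB^{\ast}$, so $\gamma_\mB$ produces a countable $\mB$-cofinite subcollection $\{V_{B_k, W_k}\}_{k \in \omega}$. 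The corresponding $\{W_k\} \subset \mU$ is then $\mB^n$-cofinite: for any basic $B_1 \times \cdots \times B_n$, choose $B^{\ast} \in \mB^{\ast}$ containing $B_1 \cup \cdots \cup B_n$; since $B^{\ast} \subset V_{B_k, W_k}$ for all but finitely many $k$, one gets $B_1 \times \cdots \times B_n \subset (B^{\ast})^n \subset V_{B_k, W_k}^n \subset W_k$ for those $k$.

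For (d), given a nontrivial $\mB \oplus \mL$-covering $\mU$ of $X \sqcup Y$, every $U \in \mU$ has the form $U_X \sqcup U_Y$ with $U_X \subset X$ and $U_Y \subset Y$ open, and I would set $\mathcal{W} := \{U_X \times U_Y : U_X \sqcup U_Y \in \mU\}$. This is a nontrivial $\mB \otimes \mL$-covering of $X \times Y$: for any $B \in \mB$ and $L \in \mL$, a $U \in \mU$ with $B \sqcup L \subset U$ satisfies $B \times L \subset U_X \times U_Y$; and $U_X \times U_Y = X \times Y$ would force $U_X = X$ and $U_Y = Y$, i.e.\ $U = X \sqcup Y$, contradicting nontriviality. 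The $\gamma_{\mB \otimes \mL}$-hypothesis then extracts a countable $\mB \otimes \mL$-cofinite subcollection, and the corresponding $\{U_n\} \subset \mU$ is $\mB \oplus \mL$-cofinite precisely because $B \times L \subset U_{X,n} \times U_{Y,n}$ is equivalent to $B \subset U_{X,n}$ and $L \subset U_{Y,n}$, i.e.\ to $B \sqcup L \subset U_n$.

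The main technical point is (c): the diagonal trick there depends essentially on compactness of the elements of $\mB^{\ast}$ so that Wallace's theorem applies, and one has to verify nontriviality of the auxiliary cover $\mathcal{V}$ before appealing to $\gamma_\mB$. By contrast, (d) rests on the elementary observation that the assignment $U_X \sqcup U_Y \mapsto U_X \times U_Y$ converts $\mB \oplus \mL$-cofiniteness into $\mB \otimes \mL$-cofiniteness and vice versa; somewhat curiously, the argument uses only the hypothesis that $X \times Y$ is $\gamma_{\mB \otimes \mL}$, not the separate $\gamma$-properties of the two factors.
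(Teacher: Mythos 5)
The paper states this proposition without proof, so there is no in-text argument to measure yours against; judged on its own, your proof is correct and is essentially the expected one: (a) and (b) are routine ideal/subspace manipulations (with the clopen complement $X\setminus Y$ making the lifted family a genuine nontrivial $\mB$-covering of $X$), (c) is the Gerlits--Nagy finite-power argument run through Wallace's theorem -- which is indeed exactly where the compact base enters -- and (d) is the sum--product correspondence $U_X\sqcup U_Y\leftrightarrow U_X\times U_Y$ that also underlies the paper's homeomorphism between $C_{\mB\oplus\mL}(X+Y)$ and $C_\mB(X)\times C_\mL(Y)$. The one point where I would ask you to add a sentence: the paper's definition of a $\mB$-cofinite covering requires an \emph{infinite} collection of proper open sets, and in (c) and (d) the passage from the extracted family $\{V_{B_k,W_k}\}_{k\in\omega}$ (resp.\ $\{U_{X,n}\times U_{Y,n}\}_{n\in\omega}$) back to $\{W_k\}_{k\in\omega}\subset\mU$ is not injective, so you should check that the resulting subcollection of $\mU$ is still infinite as a set. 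This is easy to supply: if only finitely many distinct $W_k$ occurred, one of them, say $W$, would be equal to $W_k$ for infinitely many $k$ and hence would contain every basic box $B_1\times\cdots\times B_n$; since every singleton belongs to any bornology, $W$ would contain every point of $X^n$, contradicting its properness. (In (b) no such issue arises, since $U\mapsto V_U\cup(X\setminus Y)$ is injective.) Your closing observation about (d) is also accurate: the argument uses only the hypothesis that $X\times Y$ is a $\gamma_{\mB\otimes\mL}$-space, and item (d) is precisely the converse direction needed for the corollary that follows the proposition in the paper.
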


\begin{corollary}
Let $X$ and $Y$ be topological spaces with bornologies $\mB$ and $\mL$, respectively, both of them with compact bases. Then $X\times Y$ is a $\gamma_{\mB\otimes\mL}$-space if and only if $X+ Y$ is a $\gamma_{\mB\oplus \mL}$-space.
\end{corollary}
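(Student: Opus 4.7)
The plan is to derive both implications from the four items of the preceding proposition, using the standard observation that $X\times Y$ sits as a clopen subspace of $(X+Y)^2$. This is essentially the scheme underlying Lemma~\ref{trick}, adapted to keep track of the bornologies.

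For the direction ``$X+Y$ is $\gamma_{\mB\oplus\mL}$ $\Rightarrow$ $X\times Y$ is $\gamma_{\mB\otimes\mL}$'', I would first apply item (c) to conclude that $(X+Y)^2$ is a $\gamma_{(\mB\oplus\mL)^2}$-space. Since $X\times Y$ is clopen, in particular closed, in $(X+Y)^2$, item (b) then makes it a $\gamma$-space for the induced bornology. A short computation identifies this bornology: a basic element of $(\mB\oplus\mL)^2$ has the form $(B_1\sqcup L_1)\times(B_2\sqcup L_2)$ with $B_i\in\mB$ and $L_i\in\mL$, and its trace on $X\times Y$ is just $B_1\times L_2$. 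Hence the induced base is exactly $\{B\times L:B\in\mB,\,L\in\mL\}$, which is a base for $\mB\otimes\mL$.

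For the converse, I would first extract from the hypothesis that $X$ is a $\gamma_\mB$-space and $Y$ is a $\gamma_\mL$-space. Fix any $y_0\in Y$; the slice $X\times\{y_0\}$ is a closed subspace of $X\times Y$ (using the $\textnormal{T}_1$ separation implicit in the Tychonoff context of the paper), so by item (b) it is a $\gamma$-space for the induced bornology, which has cofinal base $\{B\times\{y_0\}:B\in\mB\}$; this corresponds to $\mB$ under the canonical homeomorphism $X\cong X\times\{y_0\}$. Symmetrically, $Y$ is a $\gamma_\mL$-space. All three hypotheses of item (\ref{voltagama}) are then in place, and that item delivers the desired conclusion that $X\sqcup Y=X+Y$ is a $\gamma_{\mB\oplus\mL}$-space.

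The main obstacle here is not conceptual but accounting: one must verify that each induced bornology appearing in the argument --- whether on a closed subspace, a product, or a slice --- matches the expected combinator ($\mB\otimes\mL$, $\mB\oplus\mL$, or $\mB$ itself) on the nose, so that items (b), (c), and (\ref{voltagama}) apply verbatim. Once those identifications are in place, the corollary reduces to a clean two-step invocation of the preceding proposition.
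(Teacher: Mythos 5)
Your argument is correct and is essentially the assembly the paper intends (the corollary is left without an explicit proof): the implication from $X+Y$ to $X\times Y$ is the proof of Lemma~\ref{trick} unfolded through items (b) and (c) while tracking the bornologies, the converse is item (\ref{voltagama}), and your identifications of the trace bornologies ($B_1\times L_2$ on the clopen piece $X\times Y\subset (X+Y)^2$, and $\{B\times L:B\in\mB,\ L\in\mL\}$ as a base for $\mB\otimes\mL$) are right. The one point worth flagging is your use of the closed slice $X\times\{y_0\}$ to extract that $X$ is a $\gamma_\mB$-space and $Y$ a $\gamma_\mL$-space: this needs $\{y_0\}$ to be closed, a T$_1$ hypothesis the corollary as stated does not make (even if the section's ambience is Tychonoff). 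It can be avoided by a direct argument: for a nontrivial $\mB$-covering $\mU$ of $X$, the family $\{U\times Y:U\in\mU\}$ is a nontrivial $(\mB\otimes\mL)$-covering of $X\times Y$, and any countable $(\mB\otimes\mL)$-cofinite subcovering of it pulls back to a countable $\mB$-cofinite subcovering of $\mU$, so the factors are $\gamma$-spaces for their bornologies with no separation assumptions.
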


We finally can state and prove the desired extension of Proposition \ref{corjordan}.

\begin{corollary}
Let $X$ be a Tychonoff space and let $\mB$ be a bornology with a compact base on $X$. If $X$ is $\gamma_\mB$-productive, then $X\times Y$ is a $\gamma_{\mB\otimes \mL}$-space for any Tychonoff space $Y$ endowed with a bornology $\mL$ with a compact base such that $Y$ is a $\gamma_{\mL}$-space.
\end{corollary}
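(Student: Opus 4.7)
The plan is to translate the statement into a question about function spaces and then combine Jordan and Mynard's productivity theorem with the homeomorphism between $C_{\mB\oplus\mL}(X+Y)$ and $C_\mB(X)\times C_\mL(Y)$. By the corollary immediately preceding the statement, $X\times Y$ is $\gamma_{\mB\otimes\mL}$ if and only if the disjoint sum $X+Y$ is $\gamma_{\mB\oplus\mL}$, so I would work with the disjoint sum throughout. Via the extended McCoy--Ntantu theorem, proving that $X+Y$ is a $\gamma_{\mB\oplus\mL}$-space reduces to showing that $C_{\mB\oplus\mL}(X+Y)$ is Fr\'echet, and the preceding homeomorphism identifies this space with the product $C_\mB(X)\times C_\mL(Y)$.

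First I would argue that $C_\mB(X)$ is a productively Fr\'echet space. Since $X$ is $\gamma_\mB$-productive by hypothesis, the filter $\Gamma_\mB(X)$ is productively Fr\'echet; Jordan's results guarantee that the class of productively Fr\'echet filters is $\FF_1$-composable and $\FF_\omega$-steady, so Theorem~\ref{ultimate} applies with $\KK$ equal to this class, yielding that every neighborhood filter in $C_\mB(X)$ is productively Fr\'echet, i.e., $C_\mB(X)$ is a productively Fr\'echet space. Next, since $Y$ is a $\gamma_\mL$-space and $\mL$ has a compact base, the McCoy--Ntantu theorem (together with the remark following it) implies that $C_\mL(Y)$ is strongly Fr\'echet.

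With both halves in place, Theorem~\ref{prodfrechet} (Jordan--Myn\'ard) gives that the product $C_\mB(X)\times C_\mL(Y)$ is Fr\'echet. Transporting this back along the homeomorphism, $C_{\mB\oplus\mL}(X+Y)$ is Fr\'echet, so by the extended McCoy--Ntantu equivalence the sum $X+Y$ is a $\gamma_{\mB\oplus\mL}$-space, and finally the preceding corollary delivers the desired conclusion that $X\times Y$ is a $\gamma_{\mB\otimes\mL}$-space.

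The main obstacle, if any, is administrative rather than conceptual: one must verify that the class of productively Fr\'echet filters really is $\FF_1$-composable and $\FF_\omega$-steady (so that Theorem~\ref{ultimate} applies to the bornological filter $\Gamma_\mB(X)$, not just to $\Gamma(X)$), and that the McCoy--Ntantu-type equivalence between $\gamma_\mB$ and Fr\'echetness of $C_\mB(X)$ transfers faithfully to the disjoint-sum setting. Both points are already implicit in earlier sections, so the proof is essentially a chain of quotations once the product/sum dichotomy is exploited.
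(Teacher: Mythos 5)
Your proposal is correct and follows essentially the same route as the paper's own proof: pass to $C_\mB(X)\times C_\mL(Y)\cong C_{\mB\oplus\mL}(X+Y)$, apply Theorem~\ref{ultimate} with $\KK$ the productively Fr\'echet filters to get that $C_\mB(X)$ is productively Fr\'echet, invoke Jordan--Mynard together with the strong Fr\'echetness of $C_\mL(Y)$, and transfer back through the sum/product corollary. The only difference is that you spell out the appeals to $\FF_1$-composability, $\FF_\omega$-steadiness, and the strongly-Fr\'echet remark that the paper leaves implicit.
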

\begin{proof}
If $X$ is $\gamma_\mB$-productive then $C_\mB(X)$ is productively Fr\'echet. Since $Y$ is a $\gamma_{\mL}$-space, it follows that $C_{\mL}(Y)$ is Fr\'echet, hence $C_\mB(X)\times C_\mL(Y)$ is (strongly) Fr\'echet. But $C_\mB(X)\times C_\mL(Y)$ is homeomorphic to $C_{\mB\oplus \mL}(X+ Y)$, thus $X+ Y$ is a $\gamma_{\mB\oplus \mL}$-space, and the conclusion follows from the last corollary.
\end{proof}

Particularly, if a Tychonoff space $X$ is $\gamma$-productive, then $X\times Y$ is a $\gamma_{[X]^{<\omega}\otimes \mL}$-space whenever $Y$ is a Tychonoff $\gamma_{\mL}$-space, where $\mL$ is a bornology with a compact base on $Y$. This suggests that the converse of Proposition \ref{corjordan} may be false.

\bibliography{references}{}
\bibliographystyle{abbrv}
\end{document}